 \numberwithin{equation}{section}
\theoremstyle{nonumberplain}  
\newtheorem{proof}{Proof} 
\theoremstyle{plain}  
\newtheorem{proposition}{Proposition}[section]  
\newtheorem{lemma}[proposition]{Lemma}  
\newtheorem{theorem}[proposition]{Theorem}   
\newtheorem{remark}[proposition]{Remark}
\newtheorem{example}[proposition]{Example}  
\newtheorem{nonexample}[proposition]{Non-Example}  
\newtheorem{definition}[proposition]{Definition} 
\newtheorem{notation}[proposition]{Notation} 
\theoremstyle{nonumberplain}
\newcommand*{\res}{\operatorname{res}}
\newcommand{\R}{\mathbb{R}}
\newcommand{\V}{\mathcal{V}}
\newcommand{\W}{\mathcal{W}}
\newcommand{\N}{\mathbb{N}}
\newcommand{\Z}{\mathbb{Z}}
\newcommand{\dd}{\mathrm{d}}
\newcommand{\tr}{\mathrm{tr}}
\newcommand{\End}{\mathrm{End}}
\newcommand{\id}{\mathrm{id}}
\newcommand{\<}{\left\langle}
\renewcommand{\>}{\right\rangle}
\newcommand{\n}{\mathbf{n}}
\title{Path Integrals on Manifolds with Boundary}
\author{ Matthias Ludewig}
\begin{document}

\maketitle
 
\begin{center}
  Max Planck Institut f\"ur Mathematik \\ 
  Vivatgasse 7 / 53111 Bonn, Germany \\ \medskip
 maludewi@mpim-bonn.mpg.de
\end{center}

\begin{abstract}
We give time-slicing path integral formulas for solutions to the heat equation corresponding to a self-adjoint Laplace type operator acting on sections of a vector bundle over a compact Riemannian manifold with boundary. More specifically, we show that such a solution can be approximated by integrals over finite-dimensional path spaces of piecewise geodesics subordinated to increasingly fine partitions of the time interval. We consider a subclass of mixed boundary conditions which includes standard Dirichlet and Neumann boundary conditions.
\end{abstract}

%%%%%%%%%%%%%%%%%%%%%%%%%%%%%%%%%%%%%%%%%%%%%%%%%%%%%%%%%%%%%%%%

\section{Introduction}

First let $M$ be a compact Riemannian manifold without boundary and let $V \in C^\infty(M)$. For $u_0 \in L^2(M)$, let $u$ be the solution to the heat equation
\begin{equation*}
  \left(\frac{\partial}{\partial t} + \Delta + V\right)u(t, x) = 0, ~~~~~~~~ u(0, x) = u_0(x).
\end{equation*}
It is a well-known heuristic physicist's principle going back to Feynman \cite{FeynmanHibbs} that the solution $u(t, x)$ should be given by the {\em path integral}
\begin{equation} \label{HeuristicPathIntegral}
  u(t, x) = \fint \exp \left( - \frac{1}{4}\int_0^t \bigl|\dot{\gamma}(s)\bigr|^2 \dd s - \int_0^t V\bigl(\gamma(s)\bigr) \dd s \right) u_0\bigl(\gamma(t)\bigr) \mathcal{D}\gamma
\end{equation}
where one integrates over the space of all paths $\gamma: [0, t] \longrightarrow M$ satisfying $\gamma(0) = x$ with respect to some Lebesgue type measure on this space (and normalizes suitably, indicated by the slash over the integral sign). However, it is somewhat difficult to make sense of formula \eqref{HeuristicPathIntegral} mathematically: It is not clear which regularity the paths should have (although the first term indicates that they should have at least the first derivative in $L^2$), but for most options, it is known that there does not exist a measure $\mathcal{D}\gamma$ such that \eqref{HeuristicPathIntegral} holds. To make things worse, the normalization constant will be usually infinite.

%One way to make sense of formula \eqref{HeuristicPathIntegral} is the Wiener measure. Namely, by the well-known Feynman-Kac formula, one has
%\begin{equation*}
%  u(t, x) = \int \exp \left( - \int_0^t V\bigl(\gamma(s)\bigr) \dd s \right) u_0\bigl(\gamma(t)\bigr) \dd\mathbb{W}(\gamma)
%\end{equation*}
%where one integrates with respect to the Wiener measure on the space of all continuous paths $\gamma: [0, t] \longrightarrow M$ with $\gamma(0) = x$. Notice however, that the integral over $|\dot{\gamma}(s)|^2$ disappeared; it has been absorbed into the measure. This may be disadvantageous for applications.

One way make sense of formula \eqref{HeuristicPathIntegral} is by approximating the space of all paths by finite-dimensional spaces of geodesics. Namely, for a partition $\tau = \{0 < \tau_0 < \tau_1 < \dots < \tau_N = t\}$ of the interval $[0, t]$, set
\begin{equation} \label{DefinitionHTau}
  H_{x;\tau}(M) := \bigl\{ \gamma \in C([0, t], M) \mid \gamma|_{[\tau_{j-1}, \tau_j]}~\text{is a geodesic} \bigr\}.
\end{equation}
We will see that this space has a natural manifold structure. Tangent vectors at a path $\gamma \in H_{x;\tau}(M)$ can be naturally identified with piece-wise Jacobi fields along $\gamma$, and it turns out that the discretized $H^1$ metric
\begin{equation} \label{DiscretizedMetric}
  (X, Y)_{\Sigma\text{-}H^1} := \sum_{j=1}^N \< \frac{\nabla}{\dd s}X(\tau_{j-1}+), \frac{\nabla}{\dd s}Y(\tau_{j-1}+)\> \Delta_j \tau
\end{equation}
is the most natural Riemannian metric on $H_{x;\tau}(M)$ (here, the $+$ indicates that we take the derivative coming from above at the node, and we set $\Delta_j\tau := \tau_j - \tau_{j-1}$). Integrating over $H_{x;\tau}(M)$ with respect to the Riemannian volume measure for this metric, one has
\begin{equation} \label{FiniteDimensionalApproximation}
  u(t, x) = \lim_{|\tau|\rightarrow 0} \fint_{H_{x;\tau}(M)} \exp \left( - \frac{1}{4}\int_0^t \bigl|\dot{\gamma}(s)\bigr|^2 \dd s - \int_0^t V\bigl(\gamma(s)\bigr) \dd s \right) u_0\bigl(\gamma(t)\bigr) \dd\gamma,
\end{equation}
where the limit goes over any sequence of partitions with mesh $|\tau| := \max_{1\leq j \leq N} \Delta_j \tau$ going to zero, and the slash over the integral sign indicates that we normalize the integral dividing by $(4\pi)^{-\dim(H_{x;\tau}(M))/2}$. Formula \eqref{FiniteDimensionalApproximation} is valid uniformly in $x$ in the case that $u_0$ is continuous and holds in the $L^p$ sense in the case that $u \in L^p$.
For the $C^0$ case, such a result was proved already by Andersson and Driver \cite{anderssondriver} and was later generalized to the case of vector-valued Laplacians by B\"ar and Pf\"affle \cite{bpapproximation}.

\medskip

In this paper, will further generalize these results to the case that $M$ is a compact Riemannian manifold with boundary. It turns out that in this case, the space $H_{x;\tau}(M)$ has to be replaced by the space $H_{x;\tau}^{\mathrm{refl}}(M)$ of piecewise {\em reflected} geodesics. Roughly speaking, a reflected geodesic is a path $\gamma: [0, t] \longrightarrow M$ that is a geodesic near all times $s \in [0, t]$ such that $\gamma(s) \notin \partial M$ and at the times $s \in [0, t]$ with $\gamma(s) \in \partial M$, $\gamma$ reflects with the angle of reflection equal to the angle of incidence. 

We will prove a version of the approximation formula \eqref{FiniteDimensionalApproximation} for general self-adjoint Laplace type operators, acting on sections of a metric vector bundle over $M$, and for various boundary conditions. The class of boundary conditions we consider, will be called {\em involutive boundary conditions}; it is a subclass of mixed boundary conditions and includes standard Dirichlet and Neumann boundary conditions, as well as absolute and relative boundary conditions on differential forms and vector fields.

For the operator $\Delta + V$ on functions with Neumann boundary conditions, formula \eqref{FiniteDimensionalApproximation} remains valid as it is except that one has to replace the integration domain $H_{x;\tau}(M)$ by $H_{x;\tau}^{\mathrm{refl}}(M)$. In the case of Dirichlet boundary conditions, \eqref{FiniteDimensionalApproximation} becomes
\begin{equation*}  
u(t, x) = \lim_{|\tau|\rightarrow 0} \fint_{H_{x;\tau}(M)} \exp \left( - \frac{1}{4}\int_0^t \bigl|\dot{\gamma}(s)\bigr|^2 \dd s - \int_0^t V\bigl(\gamma(s)\bigr) \dd s \right) u_0\bigl(\gamma(t)\bigr) (-1)^{\mathrm{refl}(\gamma)} \dd\gamma,
\end{equation*}
where $\mathrm{refl}(\gamma)$ denotes the number of boundary reflections of the path $\gamma$.

Our proof uses a well-known theorem of Chernoff about proper families (see Prop.~\ref{PropChernoff} below). In order to show that this result can be applied, we make a time rescaling and reformulate our path integral formula using the broken billiard flow (which is defined in Section~\ref{SectionReflectedGeodesics}). Of course, our proof also works in the case that $\partial M = \emptyset$. This gives a new proof of the path integral formula \eqref{FiniteDimensionalApproximation} in the closed case, which we believe is simpler than the ones existing in the literature: It neither uses stochastic analysis nor knowledge of the short-time asymptotics of the heat kernel, only basic properties of the geodesic flow (which is the analog of the broken billiard flow in the closed case). The result that the approximation \eqref{FiniteDimensionalApproximation} is valid in $L^p$ for $u_0 \in L^p$ also seems to be new.

\medskip

The paper is structured as follows. To set notation, we first review some basic results on vector-valued Laplace type operators acting on vector bundles, and we introduce the class of boundary conditions considered in this paper. Afterwards, we introduce the broken billiard flow on a manifold with boundary, reflected geodesics and the space $H_{x;\tau}^{\mathrm{refl}}(M)$. In the final section, we formulate and prove our results on time-slicing path integrals.

\medskip

\textbf{Acknowledgements}. I would like to thank Christian Bär, Rafe Mazzeo, Franziska Beitz and Florian Hanisch for many helpful discussions. Furthermore, I am indepted to Potsdam Graduate School, The Fulbright Program and SFB 647 for financial support.

\section{Involutive Boundary Conditions and the Heat Equation} \label{SectionBoundaryConditionsHeatEquation}

Let $L$ be a formally self-adjoint Laplace type operator in the sense of \cite{bgv}, acting on sections of a metric vector bundle $\V$ over a compact $n$-dimensional Riemannian manifold $M$, possibly with boundary (here a metric vector bundle means a real or complex vector bundle with a positive definite scalar product or Hermitean form, respectively). For any such operator $L$, there exists a unique metric connection $\nabla$ and a unique symmetric endomorphism field $V$ such that
\begin{equation} \label{LaplaceDecomposition}
  L = \nabla^* \nabla + V,
\end{equation}
where $\nabla^*$ is the $L^2$-adjoint of the differental operator $\nabla$. 

This determines a {\em path-ordered exponential} $\mathcal{P}(\gamma)$ along piecewise smooth paths $\gamma: [a, b] \longrightarrow M$, needed subsequently: Let $P(s) \in \mathrm{Hom}(\V_{\gamma(a)}, \V_{\gamma(s)}\bigr)$ be the unique solution to the ordinary differential equation
\begin{equation} \label{ODEforE}
  \frac{\nabla}{\dd t} P(s) = V\bigl(\gamma(s)\bigr) P(s), ~~~~~~~~~~ P(a) = \id,
\end{equation}
where $\nabla$ and $V$ are the connection and potential determined by \eqref{LaplaceDecomposition}. The path-ordered exponential $\mathcal{P}(\gamma)$ is then defined by $\mathcal{P}(\gamma) := P(b) \in \mathrm{Hom}(\V_{\gamma(a)}, \V_{\gamma(b)})$.

For example, if $V \equiv 0$ along $\gamma$, we have $\mathcal{P}(\gamma) = [\gamma\|_0^t]$,
the parallel transport map along $\gamma$ with respect to $\nabla$. In the scalar case, when $\nabla = d + i\omega$ for some one-form $\omega \in \Omega^1(M)$, the differential equation \eqref{ODEforE} can be solved explicitly, giving
\begin{equation}\label{ExplicitFormulaE}
  \mathcal{P}(\gamma) = \exp \left( - i \int_0^t \omega|_{\gamma(s)} \cdot \dot{\gamma}(s) \,\dd s + \int_0^t V\bigl(\gamma(s)\bigr)\,\dd s \right).
\end{equation}
In the general vector-valued case, however, there is usually no closed-form solution for $\mathcal{P}( \gamma)$.

\begin{remark}[Invertibility]
 $\mathcal{P}(\gamma)$ is always invertible, and $\mathcal{P}(\gamma)^{-1} = Q(t)$, where $Q(s) = P(s)^{-1} \in \mathrm{Hom}(\V_{\gamma(s)}, \V_{\gamma(a)})$ satisfies the differential equation
\begin{equation} \label{ODEforEinverse}
  \nabla_s Q(s) = -Q(s)V\bigl(\gamma(s)\bigr), ~~~~~~~~ Q(a) = \id,
\end{equation}
as is easy to verify by differentiating the identity $\id = P(s)^{-1}P(s)$ and using uniqueness of solutions.
\end{remark}

\begin{remark}[Multiplicativity] \label{RemarkPBMultiplicativity}
$\mathcal{P}(\gamma)$ is multiplicative, in the sense that if $\gamma_1$, $\gamma_2$ are paths parametrized by $[0, t_1]$ and $[0, t_2]$ respectively, such that $\gamma_1(t) = \gamma_2(0)$, then we have $\mathcal{P}(\gamma_2)\mathcal{P}(\gamma_1) = \mathcal{P}(\gamma_1 * \gamma_2)$, where 
\begin{equation} \label{ConcatentationDefinition}
  (\gamma_1 * \gamma_2)(s) :=\begin{cases} \gamma_1(s) &\text{if}~s \leq t_1\\ \gamma_2(s-t_1) &\text{if}~t_1 \leq s \leq t_1+t_2\end{cases}
\end{equation}
denotes the concatenation. Again, this is easy to verify using uniqueness of solutions for ordinary differential equations.
\end{remark}

Given a formally self-adjoint Laplace-type operator and a section $u_0 \in C^\infty(M, \V)$, we can consider the heat equation
\begin{equation} \label{TheHeatEquation}
  \left(\frac{\partial}{\partial t}  + L \right) u(t, x) = 0, ~~~~~~ u(0, x) = u_0(x)
\end{equation}
for time-dependent sections $u$ of $\V$. In case that $M$ has a boundary, one has to require boundary conditions to make the heat equation well-posed. 

\begin{definition}[Involutive Boundary Conditions] \label{DefReflectingBoundaryConditions}
Let $L$ be a formally self-adjoint Laplace type operator, acting on sections of a metric vector bundle $\V$ over a compact Riemannian manifold with boundary $M$. A symmetric endomorphism field $B \in C^\infty(\partial M, \mathrm{End}(\V|_{\partial M}))$ is called an {\em involutive boundary operator for} $L$ if $B^2 = \id$ and if $B$ is covariantly constant with respect to the connection determined by \eqref{LaplaceDecomposition}. To such an boundary operator $B$, there corresponds a splitting
\begin{equation} \label{BoundarySplitting}
\V|_{\partial M} = \W^+ \oplus \W^-
\end{equation}
into the eigenspaces of the eigenvalues $\pm 1$ (notice that only these two eigenvalues are possible since $B^2=\id$). We say that a section $u \in C^\infty(M, \V)$ {\em satisfies the boundary condition} defined by $B$ if
\begin{equation} \label{ReflectingBoundaryCondition}
  \nabla_\n u|_{\partial M} \in C^\infty(\partial M, \mathcal{W}^-), ~~~~~~~ u|_{\partial M} \in C^\infty(\partial M, \mathcal{W}^+),
\end{equation}
where $\n \in C^\infty(\partial M, N\partial M)$ denotes the interior normal vector to the boundary.
\end{definition}

\begin{notation}
For a boundary operator $B$, let $C^\infty_B(M, \V)$ be the space of smooth sections of $\V$ that satisfy the boundary condition and let $H^2_B(M, \V) := \overline{C^\infty_B(M, \V)} \subseteq H^2(M, \V)$ be its closure with respect to some $H^2$ norm. 
\end{notation}

\begin{remark} \label{remarkOnMixedBC}
The class of involutive boundary conditions is closely related to the class of {\em mixed boundary conditions}, as defined e.g.\ in \cite[Section~1.5.3]{GilkeySpectralGeometry}. However, mixed boundary conditions are slightly more general, therefore we stick to the term "involutive boundary condition" in this paper (c.f.\ Chapter~II of \cite{GreinerHeatEquation}, Section~1.11.2 in \cite{gilkey95} or Sections~1.4-1.6 in \cite{GilkeySpectralGeometry} for a much more general discussion). 
\end{remark}

Involutive boundary conditions ensure that the operator $L$ as an unbounded operator on $L^2(M, \V)$ is essentially self-adjoint on $C^\infty_B(M, \V)$ and self-adjoint on $H^2_B(M, \V)$. We say that $L$ {\em is endowed with involutive boundary condition} $B$ if $L$ has the latter domain and $B$ is an involutive boundary operator.

It is clear that any involutive boundary condition is in particular elliptic, hence it follows from the standard theory that when $L$ is endowed with such a boundary condition, it has a discrete spectrum $\lambda_1 \leq \lambda_2 \leq \dots \rightarrow \infty$, where the eigenvalues have finite multiplicity and the corresponding eigenfunctions $\phi_j$ are contained in $C^\infty_B(M, \V)$ (see e.g.\ the references in Remark~\ref{remarkOnMixedBC} above).
In particular, $L$ generates a strongly continuous semigroup $e^{-tL}$, defined by spectral calculus. For any initial condition $u_0 \in L^2(M, \V)$, the function
\begin{equation*}
u(t, x) := (e^{-tL}u_0)(x)
\end{equation*}
satisfies the heat equation \eqref{TheHeatEquation} with initial condition $u(0, x) = u_0(x)$.

\medskip

We now give a couple of examples for involutive boundary conditions.

\begin{example}[Dirichlet and Neumann] \label{ExampleBoundaryConditionsDirichletNeumann}
For any Laplace type operator, there is the {\em Dirichlet} boundary condition $u|_{\partial M} = 0$, associated to the boundary operator $B = -\id$, and the {\em Neumann} boundary condition $\nabla_\n u|_{\partial M} = 0$ associated to the boundary operator $B= \id$. Here $\id$ denotes the identity endomorphism field of $\V|_{\partial M}$, which is parallel with respect to any connection on $\V$ (or more precisely: with respect to any connection on the bundle $\mathrm{End}(\V)$ induced from a connection on $\V$). Both are therefore involutive boundary conditions.
\end{example}

\begin{nonexample}[Robin Boundary Conditions]
Given a metric connection $\nabla$, the {\em generalized Neumann} boundary conditions or {\em Robin} boundary conditions 
\begin{equation} \label{RobinBoundaryConditions}
  \nabla_\n u|_{\partial M} + A u|_{\partial M} = 0.
\end{equation}
for an endomorphism field $A \in C^\infty(\partial M, \End(\V))$ are {\em not} involutive boundary conditions for operators of the form $L = \nabla^*\nabla + V$, unless $A \equiv 0$.
\end{nonexample}

\begin{example}[Boundary Conditions on Differential Forms] \label{ExampleBoundaryConditionsOnForms}
Let $\V = \Lambda^k T^*M$ be the bundle of $k$-forms.
Any $\omega \in \Lambda^k T^*M$ can be decomposed at the boundary as 
\begin{equation*}
\omega = \omega_0 + dr \wedge \omega_1,~~~~~~~~\omega_0 \in \Lambda^kT^*\partial M, ~~\omega_1 \in \Lambda^{k-1}T^*\partial M,
\end{equation*}
where $dr := \n^\flat$. Hence for the exterior products of the cotangent bundle, we have the orthogonal splitting
\begin{equation*}
  \Lambda^k T^*M|_{\partial M} \cong \Lambda^k T^*\partial M \oplus dr \wedge \Lambda^{k-1}T^*\partial M.
\end{equation*}
Defining $B$ to be equal to $1$ on one of these factors and equal to $-1$ on the other will induce involutive boundary conditions for Laplace type operators $L = \nabla^*\nabla+V$ on $\V$, where $\nabla$ is any metric connection on $\V$. Specifically, setting
\begin{equation}
  \mathcal{W}^+ := \Lambda^k T^*\partial M, ~~~~~~ \mathcal{W}^- := dr \wedge \Lambda^{k-1}T^*\partial M
\end{equation}
gives the so-called {\em absolute boundary conditions}. 
Setting 
\begin{equation} \label{RelativeBoundaryCondition1}
  \mathcal{W}^+ := dr \wedge \Lambda^{k-1}T^*\partial M, ~~~~~~ \mathcal{W}^- := \Lambda^k T^*\partial M
\end{equation}
gives {\em relative boundary conditions}. 
\end{example}

The examples show that the class of involutive boundary conditions includes most standard types of boundary conditions. Let us make a warning here that "involutive" is not standard terminology, but such a class of boundary conditions doesn't seem to have a name in the literature yet.

\subsection{Reflected Geodesics and the Broken Billiard Flow} \label{SectionReflectedGeodesics}

Let $M$ be a compact $n$-dimensional Riemannian manifold with boundary. Denote by $\n \in C^\infty(\partial M, N\partial M)$ the interior unit normal field. We say that a vector $v \in TM|_{\partial M}$ {\em points inward} if $\<v, \n\> > 0$ and we say that it {\em points outward} if $\<v, \n\> < 0$. If $v$ points neither outward nor inward, then clearly $v \in T\partial M$.

\begin{notation}[Reflection at the Boundary]
Set
\begin{equation}
Rv := v - 2\<v, \n\>\n, ~~~~~~ v \in TM|_{\partial M}
\end{equation}
for the reflection at $T\partial M$. We have $R \in C^\infty(\partial M, \End(TM|_{\partial M}))$.
\end{notation}

\begin{definition}[Reflected Geodesics] \label{DefReflectedGeodesic}
A {\em reflected geodesic} is a continuous map $\gamma:[a, b] \longrightarrow M$ such that
\begin{enumerate}[(i)]
    \item $\gamma$ hits the boundary only at finitely many times $a \leq \sigma_1 < \sigma_2 < \dots < \sigma_k \leq b$, $k \in \N_0$;
    \item on each of the intervals $(a, \sigma_1)$, $(\sigma_1, \sigma_2)$, $\dots$, $(\sigma_{k-1}, \sigma_k)$, $(\sigma_k, b)$, $\gamma$ is a geodesic;
    \item $\dot{\gamma}(\sigma_j\pm)\notin T\partial M$, where $\dot{\gamma}(\sigma_j\pm)$ denotes the right-/left-sided derivative, i.e.\ $\gamma$ always hits the boundary transversally;
\item we have $\dot{\gamma}(\sigma_j-) = R\dot{\gamma}(\sigma_j+)$ for each $j=1, \dots, k$, that is, $\gamma$ reflects with the angle of reflection equal to the angle of incidence. (If $\sigma_1=a$ or $\sigma_k = b$, this condition is empty for $j=1$ respectively $j=k$.)
  \end{enumerate}
\end{definition}

The requirement (iii) excludes geodesics that "scratch along the boundary", the so-called grazing rays, which can appear e.g.\ when $M$ is the exterior of a ball in $\R^n$. 

% T underhalbstetig $\lim_{v \rightarrow v_0} T(v) \geq T(v_0)$ impliziert: Omega_t is offen.
\begin{notation} 
For $v \in TM$, let $T(v)$ be the supremum over all times $t>0$ such that a reflected geodesic $\gamma_v:[0, t] \longrightarrow M$ exists with $\dot{\gamma}_v(0+) = v$, or if $v \in TM|_{\partial M}$ is pointing outward, with $\dot{\gamma}_v(0+) = Rv$). Denote 
\begin{equation*}
  \Omega_t := \bigl\{ v \mid  T(v) > t \bigr\}
\end{equation*}
for the set of vectors $v$ such that there exists a reflected geodesic with initial condition $v$ (respectively $Rv$) up to a time larger than $t$.
\end{notation}

Obviously, we have $T(v) = -\infty$ for $v \in T\partial M$ and $T(v) > 0$ otherwise. Hence $\Omega_0 = TM \setminus T\partial M$. Since restrictions of reflected geodesics are reflected geodesics, we have furthermore $\Omega_t \supseteq \Omega_{t^{\prime}}$ for $t \leq t^\prime$. 

\begin{remark}
We generally do {\em not} have the equality $\Omega_t = TM \setminus T\partial M$ here, as two things could go wrong:
\begin{enumerate}[(a)]
  \item We may have $\lim_{s \rightarrow t_0} \dot{\gamma}_v(s) \in T\partial M$.
  \item There may be infinitely many reflections in finite time, i.e.\ reflection times $\sigma_1 < \sigma_2 < \dots$ converging to a time $t_0 < \infty$ as $j\rightarrow \infty$.
\end{enumerate}
In both cases, one cannot continue $\gamma_v$ beyond the time $t_0$ (at least not as a reflected geodesic in the sense of Def.~\ref{DefReflectedGeodesic}). In case (a), the "physically reasonable" outcome would be that $\gamma_v$ "glides along the boundary" for $t>T$, but this would mean that $\gamma_v$ is a geodesic in $\partial M$, not in $M$ ($\nabla_s \dot{\gamma}(s)$ would be proportional to $-\n$).

If $M$ is convex (i.e.\ the second fundamental form of the boundary points outward everywhere), then (a) cannot happen. Also (b) cannot happen in the case that $\partial M$ is smooth (which is always assumed here) and convex, at least if $M$ is a subset of $\R^2$, but there is an example of a convex $M \subset \R^2$ with only $C^2$ boundary, where (b) can occur \cite{Halpern}. However, to the author's knowledge, there is no (non-convex) example of a manifold $M$ with {\em smooth} boundary in literature, where (b) happens. The author does not know if (b) can happen at all. 
\end{remark}

\begin{lemma} \label{LemmaOpenAndDense}
For each $t\geq 0$, the set $\Omega_t$ is an open set of full measure in $TM$ and for each $x \in M$, the set $\Omega_{t, x} := \Omega_t \cap T_xM$ is an open set of of full measure in $T_x M$.
\end{lemma}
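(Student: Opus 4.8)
\emph{Openness.} Fix $v \in \Omega_t$. By definition the reflected geodesic $\gamma_v$ exists on some interval $[0, t']$ with $t' > t$, with finitely many reflection times $\sigma_1 < \dots < \sigma_k$, all of them transversal in the sense of Definition~\ref{DefReflectedGeodesic}(iii). I would show that $T$ is lower semicontinuous at $v$, which is equivalent to $\Omega_t$ being open. Let $\rho$ be the signed distance to $\partial M$, positive in the interior and smooth near $\partial M$, so that a boundary hit at time $\sigma$ is the event $\rho(\gamma(\sigma)) = 0$ and transversality means $\tfrac{\dd}{\dd s}\rho(\gamma(s))\big|_\sigma \neq 0$. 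Between reflections the billiard trajectory is an ordinary geodesic, hence depends smoothly on its initial vector; at each reflection, the implicit function theorem (applicable precisely because the hit is transversal) shows that the hitting time and hitting point depend smoothly on the incoming data, and the reflection map $R$ is smooth. Composing these finitely many smooth maps shows that for $v'$ in a neighbourhood of $v$ the trajectory $\gamma_{v'}$ still exists past time $t$, with the same number of reflections, each still transversal (transversality being an open condition). Hence $T(v') > t$, so $\Omega_t$ is open; intersecting with the fibre gives that $\Omega_{t,x}$ is open in $T_xM$.

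\emph{Reduction of the measure statement to the fibres.} Since rescaling the initial velocity only reparametrizes the trajectory, $T(v) = T(v/|v|)/|v|$ for $v \neq 0$, so along a ray the condition $T(v) > t$ reads $|v| < T(v/|v|)/t$. Consequently the complement of $\Omega_{t,x}$ in $T_xM$ is, up to a null set, the cone over $D_x := \{\theta \in S_x : T(\theta) < \infty\}$, and a polar-coordinate computation shows that $\Omega_{t,x}$ has full measure in $T_xM$ if and only if $D_x$ has measure zero in the unit sphere $S_x$. Moreover, writing the Riemannian volume of $TM$ in local trivializations as a positive smooth density times $\dd x\,\dd v$ and applying Fubini, the fibrewise statement ``$D_x$ is null for every $x$'' implies that the (measurable, being the complement of an open set) bad set $TM \setminus \Omega_t$ is null. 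Thus everything reduces to proving that $D_x$ is a null subset of $S_x$ for each fixed $x$.

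\emph{Null set of bad directions.} A direction lies in $D_x$ iff its unit-speed billiard trajectory either grazes the boundary (velocity tending to $T\partial M$) or accumulates reflections, at some finite arclength. Decompose $D_x = D_x^\infty \cup \bigcup_{m \ge 0} D_x^{(m)}$, where $D_x^{(m)}$ are directions that graze immediately after exactly $m$ transversal reflections and $D_x^\infty$ is the accumulation set. The submanifold $\mathcal S := S\partial M = \{(y,w) \in S\tilde M : y \in \partial M,\ w \in T_y\partial M\}$ of boundary-tangent unit vectors has codimension $2$ in the $(2n-1)$-dimensional $S\tilde M$. The plan is a dimension count: the phase points reachable from $x$ along geodesics within bounded arclength form the image of the smooth map $G_x(\theta, s) = \phi_s(x, \theta)$ from the $n$-dimensional $S_x \times [0,\infty)$ into $S\tilde M$ (away from conjugate points $G_x$ is an immersion); the map coding $m$ transversal reflections is a smooth diffeomorphism on the open set of admissible configurations, so $D_x^{(m)}$ is the projection to $S_x$ of the preimage of $\mathcal S$ under the composition of $G_x$ with these reflection maps. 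If this composition meets the codimension-$2$ set $\mathcal S$ transversally, the preimage has dimension $\le n-2$, so its projection is null; taking the countable union over $m$ preserves nullity.

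\emph{Main obstacle.} The crux is to justify the dimension count \emph{for every fixed base point $x$}, i.e.\ to show that the relevant maps meet $\mathcal S$ in a null set rather than merely for almost every initial condition (which would follow cheaply from Liouville invariance of the billiard flow). Transversality to $\mathcal S$ is not automatic, both because of conjugate points and because the boundary may be flat in some directions. I would control this through the pair $\bigl(\rho, \tfrac{\dd}{\dd s}\rho\bigr)$ along the flow, whose common zero locus is exactly $\mathcal S$: at a genuine interior-approaching grazing one has $\tfrac{\dd^2}{\dd s^2}\rho = -\mathrm{II}(\dot\gamma, \dot\gamma)$, which governs non-degeneracy in the time direction, while non-degeneracy in the angular directions should follow from non-vanishing of the relevant Jacobi fields off the (null) conjugate locus. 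The accumulation set $D_x^\infty$ is the most delicate: I expect it to sit in the closure of the grazing sets and to be controlled by the same near-tangency estimates together with measure preservation of the billiard map, but making the argument uniform in the number of reflections is, I expect, the technical heart of the proof.
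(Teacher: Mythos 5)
Your openness argument is correct and is essentially the paper's own: the paper proves openness by induction on the number of reflections, using continuous dependence of ODE solutions on initial data, which is exactly your chain of implicit-function-theorem steps at transversal hits. That part is fine (indeed more detailed than the paper's sketch), and your scaling/Fubini reduction of the measure statement to the fibrewise sets $D_x \subseteq S_x$ is also sound.

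The full-measure claim is where the proposal has a genuine gap, and you have in effect conceded it yourself. Note first that the paper does not prove this claim at all: it cites it as a known result from billiard theory (Chapter~6 of Kornfeld--Fomin--Sinai), so you are attempting something strictly harder than what the paper does. Your dimension count for $D_x^{(m)}$ hinges on the composed map being transversal to the codimension-two grazing set $\mathcal{S}$, and this transversality genuinely fails in simple examples: if $\partial M$ contains a totally geodesic (e.g.\ flat) piece and a geodesic from $x$ runs tangentially into it, the image of $G_x$ lies \emph{inside} $\mathcal{S}$ for an interval of parameter values, so the map is nowhere transversal there and the preimage has dimension $n-1$, not $n-2$. (The projection to $S_x$ is still null in that example, but only because the projection collapses the flow direction --- a different mechanism from the one you invoke, and one you would have to build into the argument systematically, together with a treatment of conjugate points, which you only wave at.) Worse, the accumulation set $D_x^\infty$ of directions with infinitely many reflections in finite time is not handled at all: you say you ``expect'' it to be controlled by near-tangency estimates and measure preservation, but Liouville invariance of the billiard flow only yields almost-everywhere statements on $TM$, not the fibrewise statement for \emph{every} $x$ that the lemma asserts --- a distinction you correctly identify as the crux and then do not resolve. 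The paper's remark immediately preceding the lemma underlines how delicate this set is (it is apparently unknown whether it can even be nonempty for smooth boundaries), so no soft argument should be expected to dispose of it. As it stands, the proposal establishes openness but not the full-measure statement.
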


\begin{proof}
That the sets $\Omega_{t, x}$ and $\Omega_t$ have full measure is a result from the theory of dynamical systems and ergodic theory, see for example Chapter 6 of \cite{SinaiFominKornfeld}. Furthermore, that the sets $\Omega_{t, x}$ and $\Omega_t$ are open is due to the fact that solutions of ordinary differential equations depend continuously on the initial data. More precisely, one can show by induction on the number of reflections that for each $v \in \Omega_t$, there exists a small neighborhood of $v$ such that for each $w$ in that neighborhood, there exists a reflected geodesic $\gamma_w$ up to time larger than $t$, and the value $\dot{\gamma}_w(t)$ depends continuously on $w$ in this neighborhood.
\end{proof}

\begin{definition}[Broken Billiard Flow]
The {\em broken billiard flow} is the measurable map $\Theta: \R \times TM \longrightarrow TM$ defined as follows. Set $\Theta_0(v) := v$. For $t>0$ and $v \in \Omega_t$, we set
\begin{equation*}
  \Theta_t(v) = \dot{\gamma}_v\bigl(t\bigr)
\end{equation*}
where $\gamma_v:[0, t] \longrightarrow M$ is the reflected geodesic with $\dot{\gamma}_v(0+) = v$, respectively $\dot{\gamma}_v(0+) = Rv$ if $v \in TM|_{\partial M}$ is outward directed. For $v \notin \Omega_t$, set $\Theta_t(v) = v$. For negative times, $t<0$, set $\Theta_t(v) := -\Theta_{-t}(-v)$.
\end{definition}

\begin{remark}
If $\partial M = \emptyset$, this is just the usual geodesic flow on the tangent bundle.
\end{remark}

\begin{remark}
The broken billiard flow is often considered on the unit sphere bundle $SM$ instead of on $TM$. Because we have
\begin{equation} \label{RescalingPropertyFlow}
  \Theta_t(v) = |v|\Theta_{t|v|} (v/|v|),
\end{equation}
both flows can be obtained from one another.
\end{remark}

Because $TM \setminus \Omega_t$ is a zero set, for each $t \in \R$, the broken billiard map $\Theta_t$ is almost invertible, in the sense that $\Theta_t\circ\Theta_{-t} = \id$ except for a zero set. Furthermore, it is well known \cite[Lemma~4]{SinaiFominKornfeld} that $\Theta_t$ preserves the volume of $TM$, just as the geodesic flow does on a complete Riemannian manifold without boundary.

\subsection{Reflected Path Spaces} \label{SectionReflectedPathSpaces}

\begin{notation}
Let $\tau = \{ 0=\tau_0 < \tau_1 < \dots < \tau_N = t\}$ be a partition of the interval $[0, t]$ and let $x \in M$. In the case that $x \notin \partial M$, set
\begin{equation*}
\begin{aligned}
  H_{x;\tau}^{\mathrm{refl}}(M) := \bigl\{ \gamma \in C^0([0, t], M) \mid \gamma(0)=x, \gamma|_{[\tau_{j-1}, \tau_j]}~&\text{is a reflected geodesic}, \gamma(\tau_j)\notin \partial M\bigr\}.
\end{aligned}
\end{equation*}
In the case that $x \in \partial M$, we let $H_{x;\tau}^{\mathrm{refl}}(M)$ be defined exactly the same, except that we additionally take the product with $\Z_2$ (here we always use the multiplicative representation $\Z_2 = \{+1, -1\}$).
\end{notation}

If $x \notin \partial M$, then $H_{x;\tau}^{\mathrm{refl}}(M)$ is just the space of piecewise reflected geodesics starting at $x$. On the other hand, if $x \in \partial M$, then 
\begin{equation*}
H_{x;\tau}^{\mathrm{refl}}(M)=\{\text{piecewise reflected geodesics starting at}~x\} \times \Z_2,
\end{equation*}
 i.e.\ the paths carry the additional information of a number $\epsilon \in \Z_2$. Heuristically, this number encodes whether or not the path reflects at time zero, i.e.\ whether it "starts inward or it starts outward and reflects immediately". This number $\epsilon$ will be called the {\em sign} of the path.

We will often just write $\gamma$ instead of $(\gamma, \epsilon)$ for elements of $H_{x;\tau}^{\mathrm{refl}}(M)$, $x \in \partial M$ (especially when integrating over this space) and consider $\gamma$ as an ordinary path "with decoration". However, the additional information on the sign has to be kept in mind.

\medskip

Before we describe the manifold structure on $H_{x;\tau}^{\mathrm{refl}}(M)$, we introduce the $B$-path-ordered exponential along elements $\gamma \in H_{x;\tau}(M)$, where $B$ is an involutive boundary condition for a Laplace-type operator $L= \nabla^*\nabla + V$, acting on a vector bundle $\V$ over $M$.

\begin{definition}[$B$-path-ordered Exponential] \label{DefBPathOrderedExponential}
Let $L = \nabla^* \nabla + V$ be a self-adjoint Laplace type operator with involutive boundary condition $B$. The {\em $B$-path-ordered exponential} $\mathcal{P}_B(\gamma)$ along paths $\gamma \in H_{x;\tau}^{\mathrm{refl}}(M)$ is defined as follows. Let $\sigma_1 < \dots < \sigma_k$ be the times in $(0, t)$ such that $\gamma(\sigma_j) \in \partial M$ (i.e.\ $\gamma(s) \notin \partial M$ for $s \neq 0$, $s\neq \sigma_j$, $j=1, \dots, k$). Set 
\begin{equation} \label{BPathOrderedExponential}
  \mathcal{P}_B(\gamma) := \mathcal{P}(\gamma|_{[\sigma_k, t]})\, B \,\mathcal{P}(\gamma|_{[\sigma_{k-1}, \sigma_k]})\,B \cdots B\, \mathcal{P}(\gamma|_{[\sigma_1, \sigma_2]}) \,B\, \mathcal{P}(\gamma|_{[0, \sigma_1]}) A,
\end{equation}
where $\mathcal{P}(\gamma|_{[\sigma_{j-1}, \sigma_j]})$ is defined as in the beginning of Section~\ref{SectionBoundaryConditionsHeatEquation}, and $A := \id$ if $x \notin \partial M$ or if $x \in \partial M$ and the sign of $\gamma$ is $+1$, while $A := B$ if $x \in \partial M$ and the sign of $\gamma$ is $-1$.
That is, we take the usual path-ordered exponential, but whenever the path $\gamma$ hits the boundary, we use the boundary involution $B$ before continuing to solve the differential equation \eqref{ODEforE}.
In particular, if $V=0$, we obtain the {\em $B$-parallel transport}, denoted by $[\gamma\|_0^t]_B$.
\end{definition}

Directly from the properties of $\mathcal{P}(\gamma)$ follows that also $\mathcal{P}_B(\gamma)$ is invertible for any path $\gamma$, and that $\mathcal{P}_B(\gamma)$ is multiplicative, i.e.\
\begin{equation} \label{MultiplicativityPB}
\mathcal{P}(\gamma_2)\mathcal{P}(\gamma_1) = \mathcal{P}(\gamma_1 * \gamma_2)
\end{equation}
for suitable paths $\gamma_1, \gamma_2$.

The $B$-path-ordered exponential can be used to obtain a manifold structure on $H_{x;\tau}^{\mathrm{refl}}(M)$. Notice that on the vector bundle $\V := TM$, there is a natural boundary operator, namely $B:=R$, the reflection at $T\partial M$. We define the {\em reflected anti-development map}
\begin{equation*}
  U_R(\gamma)(s) := \int_0^s[\gamma\|_0^u]_R^{-1} \dot{\gamma}(u) \dd u.
\end{equation*}
Then $U_R$ maps $H_{x;\tau}^{\mathrm{refl}}(M)$ to $H_{0; \tau}(T_xM)$, the vector space of piece-wise polygon paths starting at zero in $T_xM$. To verify this, we need to show that $U_R(\gamma)$ is a straight line on each of the intervals $[\tau_{j-1}, \tau_j]$, $j=1, \dots, N$. This is clear for all times $s$ where $\gamma(s) \notin \partial M$, since differentiating $U_R(\gamma)(s)$ twice with respect to $s$ gives zero. If now $\gamma(s) \in \partial M$, then
\begin{equation*}
  \frac{\dd}{\dd s} U_R(\gamma)(s-) = [\gamma\|_0^{s-}]_R^{-1} \dot{\gamma}(s-) = (R[\gamma\|_0^{s+}]_R)^{-1} R\dot{\gamma}(s+) =\frac{\dd}{\dd s} U_R(\gamma)(s+),
\end{equation*}
because the two reflections cancel each other. Hence $\gamma$ does not have a kink at $s$ and is therefore a straight line near $s$. Furthermore, $U_R$ is injective. This is clear for $x \notin \partial M$. If $x \in \partial M$, then each piece-wise reflected geodesic $\gamma$ starting at $x$ appears twice, once with negative sign and one with positive sign. But by definition of the reflected anti-development, we have $U_R(\gamma, +1)(s) = R U_R(\gamma, -1)(s)$. 

Because of Lemma~\ref{LemmaOpenAndDense}, the image $U_R(H_{x;\tau}^{\mathrm{refl}}(M)) \subseteq H_{0, \tau}(T_xM)$ is an open and dense set of full measure, so that one obtains a manifold structure on $H_{x;\tau}^{\mathrm{refl}}(M)$ by using $U_R$ as global chart. 

\begin{remark}
If $\partial M = \emptyset$, then $\Omega_t = TM$ for all $t$, and we have $H_{x;\tau}^{\mathrm{refl}}(M) = H_{x;\tau}(M)$ as defined by \eqref{DefinitionHTau}. 
\end{remark}

Notice that for two partitions $\tau$ and $\tau^\prime$ of intervals $[0, t]$ and $[0, t^\prime]$ respectively, if $\gamma \in H_{x;\tau}^{\mathrm{refl}}(M)$ and $\gamma^\prime \in H_{\gamma(t);\tau^\prime}^{\mathrm{refl}}(M)$, then the concatenation $\gamma * \gamma^\prime$ (as defined in \eqref{ConcatentationDefinition}) is contained in $H_{x; \tau * \tau^\prime}^{\mathrm{refl}}(M)$. This fact is used in the following Lemma.

\begin{lemma}[A Co-Area Formula] \label{LemmaConcatenation}
   Let $\tau = \{0 = \tau_0 < \tau_1 < \dots < \tau_N = t\}$ and $\tau^\prime = \{0 = \tau_0 ^\prime< \tau_{1}^\prime < \dots < \tau_{N^\prime}^\prime = t^\prime\}$ be partitions of the interval $[0, t]$ and $[0, t^\prime]$. Then for any integrable function $F$ on $H_{x; \tau*\tau^\prime}^{\mathrm{refl}}(M)$, we have the co-area formula
   \begin{equation*}
     \int_{H_{x; \tau*\tau^\prime}^{\mathrm{refl}}(M)} F(\gamma)\, \dd^{\Sigma\text{-}H^1} \gamma = \int_{H_{x; \tau}^{\mathrm{refl}}(M)}\int_{H_{\gamma(t); \tau^\prime}^{\mathrm{refl}}(M)} F(\gamma * \gamma^\prime) \,\dd^{\Sigma\text{-}H^1} \gamma^\prime \,\dd^{\Sigma\text{-}H^1} \gamma,
   \end{equation*}
   where each of the spaces carries the discrete $H^1$-metric defined in \eqref{DiscretizedMetric}.
\end{lemma}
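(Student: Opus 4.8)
The plan is to reduce the co-area identity to a finite-dimensional change-of-variables computation carried out in the global charts furnished by the reflected anti-development map $U_R$. Recall that $U_R$ identifies $H_{x;\tau}^{\mathrm{refl}}(M)$ with an open, dense, full-measure subset of the polygonal path space $H_{0;\tau}(T_xM)$, which is in turn coordinatized by the vertices $w=(w_1,\dots,w_N)\in (T_xM)^N$, $w_j:=U_R(\gamma)(\tau_j)$. Everything then hinges on two ingredients: an explicit formula for the concatenation map in these coordinates, and a description of the $\Sigma$-$H^1$ volume measure in these coordinates.

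I would first compute the chart expression of the concatenation map $(\gamma,\gamma')\mapsto\gamma*\gamma'$. By the multiplicativity of $R$-parallel transport (the $B=R$, $V=0$ case of \eqref{MultiplicativityPB}, cf.\ Remark~\ref{RemarkPBMultiplicativity}), one has $[(\gamma*\gamma')\|_0^u]_R=[\gamma'\|_0^{u-t}]_R\,[\gamma\|_0^t]_R$ for $u\ge t$; substituting this into the definition of $U_R$ and using $\gamma(t)\notin\partial M$ (so that no reflection occurs at the junction) gives
\[
  U_R(\gamma*\gamma')(s)=U_R(\gamma)(t)+[\gamma\|_0^t]_R^{-1}\,U_R(\gamma')(s-t),\qquad s\ge t,
\]
while $U_R(\gamma*\gamma')$ restricts to $U_R(\gamma)$ on $[0,t]$. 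In vertex coordinates, writing $(w,z)$ for the $N+N'$ vertices of $\gamma*\gamma'$ and $A:=[\gamma\|_0^t]_R^{-1}$, this reads $z_k=w_N+A\,w_k'$, where $w'=(w_k')$ are the vertices of $\gamma'$ in $T_{\gamma(t)}M$. The crucial observation is that $A$ is a composition of Levi-Civita parallel transports and boundary reflections $R$, hence a linear \emph{isometry} $T_{\gamma(t)}M\to T_xM$. In these charts the restriction $\gamma*\gamma'\mapsto\gamma$ becomes the coordinate projection $(w,z)\mapsto w$, and, by Lemma~\ref{LemmaOpenAndDense}, the concatenation identifies the iterated domain with the target up to sets of measure zero.

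Next I would show that, in the vertex chart, the Riemannian volume of the $\Sigma$-$H^1$ metric equals $C_\tau\,\dd w$, where $C_\tau:=\prod_{j=1}^N(\Delta_j\tau)^{-n/2}$ depends only on the partition. Identifying a tangent vector $X$ with the tuple $(u_j)_j$, $u_j:=\frac{\nabla}{\dd s}X(\tau_{j-1}+)$, the metric \eqref{DiscretizedMetric} takes the diagonal form $(X,X)_{\Sigma\text{-}H^1}=\sum_j|u_j|^2\Delta_j\tau$. Differentiating $U_R$ along a variation with field $X$, the covariant product rule together with the fact that the variation of the $R$-parallel transport along $\gamma|_{[0,\tau_{j-1}]}$ is a linear functional of $X|_{[0,\tau_{j-1}]}$ (given by an integral of curvature, plus boundary terms at the reflections) shows that the slope of $dU_R[X]$ on the $j$-th segment equals $[\gamma\|_0^{\tau_{j-1}}]_R^{-1}u_j$ plus a term linear in $u_1,\dots,u_{j-1}$ only. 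Hence the linear map from the $u$-coordinates to the chart (slope, equivalently vertex) coordinates is block lower-triangular with diagonal blocks the isometries $[\gamma\|_0^{\tau_{j-1}}]_R^{-1}$; its Jacobian determinant has modulus $1$, so the volume density is left unchanged and equals $\prod_j(\Delta_j\tau)^{n/2}$ in slope coordinates, i.e.\ $C_\tau\,\dd w$ in vertex coordinates. Since the segments of $\tau*\tau'$ are those of $\tau$ followed by those of $\tau'$, the densities factorize: $C_{\tau*\tau'}=C_\tau\,C_{\tau'}$.

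With these two facts the identity falls out. Writing the right-hand side in the charts and integrating over the fibre first, with $\gamma$ (hence $w$ and $A$) fixed, the substitution $z_k=w_N+A\,w_k'$ is an affine isometry in the $w'$-variables and therefore preserves the metric-Lebesgue measure, since $|\det A|^{N'}=1$. Combined with $C_{\tau*\tau'}=C_\tau C_{\tau'}$ and Fubini's theorem, this turns $\int_{H_{x;\tau}^{\mathrm{refl}}(M)}\int_{H_{\gamma(t);\tau'}^{\mathrm{refl}}(M)}F(\gamma*\gamma')$ into $\int_{H_{x;\tau*\tau'}^{\mathrm{refl}}(M)}F$, as claimed; when $x\in\partial M$ the extra $\Z_2$-decoration is simply carried along by the concatenation and summed over, so it does not affect the computation. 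I expect the genuine obstacle to be the volume fact of the previous paragraph: the $\Sigma$-$H^1$ metric is \emph{not} flat in the $U_R$-chart, as it carries curvature corrections, and the whole point is that these corrections are confined to the strictly lower-triangular part and hence drop out of the volume density. Phrased invariantly, the projection $\gamma*\gamma'\mapsto\gamma$ need not be a Riemannian submersion, yet its normal Jacobian is identically $1$ for exactly this reason.
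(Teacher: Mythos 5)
Your proof is correct, but it follows a genuinely different route from the paper's. The paper argues intrinsically: it shows that the restriction map $\mathrm{res}\colon H_{x;\tau*\tau^\prime}^{\mathrm{refl}}(M)\to H_{x;\tau}^{\mathrm{refl}}(M)$, $\gamma\mapsto\gamma|_{[0,t]}$, is a Riemannian submersion --- the kernel of $d\mathrm{res}$ consists of the piecewise Jacobi fields vanishing on $[0,t]$, its orthogonal complement with respect to \eqref{DiscretizedMetric} is the set of fields $X$ with $\nabla_s X\bigl((\tau*\tau^\prime)_{j-1}+\bigr)=0$ for $j>N$, and on this complement $d\mathrm{res}$ visibly preserves the $\Sigma$-$H^1$ norm --- then it invokes the general co-area formula for Riemannian submersions and checks that each fibre $\mathrm{res}^{-1}(\gamma)$ is isometric to $H_{\gamma(t);\tau^\prime}^{\mathrm{refl}}(M)$ via $\gamma^\prime\mapsto\gamma*\gamma^\prime$. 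You instead work extrinsically in the global $U_R$-chart: you express concatenation in vertex coordinates as an affine map whose linear part is the isometry $[\gamma\|_0^t]_R^{-1}$, you identify the $\Sigma$-$H^1$ volume density as the constant $\prod_j(\Delta_j\tau)^{-n/2}$ via the block-triangularity of the passage from the $u$-coordinates to the slope coordinates, and you conclude with Fubini. Your two key computations are sound: the triangular structure holds because $X|_{[0,\tau_{j-1}]}$, and hence the variation of the $R$-parallel transport up to time $\tau_{j-1}$, is determined by $u_1,\dots,u_{j-1}$ alone, while the coefficient of $u_j$ in the $j$-th slope is exactly $[\gamma\|_0^{\tau_{j-1}}]_R^{-1}$ by the covariant product rule and torsion-freeness; likewise the factorization $C_{\tau*\tau^\prime}=C_\tau C_{\tau^\prime}$ and the unit Jacobian of the fibrewise affine substitution are correct. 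What your route buys is an explicit formula for the chart volume, which the paper in effect also needs (for one-interval partitions) to derive \eqref{ReformulationPt}; what the paper's route buys is brevity, since the submersion property is read off from \eqref{DiscretizedMetric} with no Jacobian bookkeeping at all. One side remark of yours is, however, false: the projection $\gamma*\gamma^\prime\mapsto\gamma$ \emph{is} a Riemannian submersion, not merely a map of unit normal Jacobian --- this is precisely the paper's proof, and it also follows from your own coordinates, since on the orthogonal complement of the kernel only the first $N$ of the $u$-coordinates are nonzero, and these, together with their weights $\Delta_j\tau$, coincide with the $u$-coordinates of the restricted field.
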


\begin{proof}
 Consider the restriction maps
 \begin{equation*}
   \mathrm{res}: H_{x; \tau * \tau^\prime}^{\mathrm{refl}}(M) \longrightarrow H_{x;\tau}^{\mathrm{refl}}(M), ~~~~~ \gamma \longmapsto \gamma|_{[0, t]}
 \end{equation*}
 We show that $\mathrm{res}$ is a Riemannian submersion, i.e.\ that for any $\gamma * \gamma^\prime \in H_{x;\tau * \tau^\prime}^{\mathrm{refl}}(M)$, the linear map
\begin{equation*}
 d\mathrm{res}|_{\gamma * \gamma^\prime} : T_{\gamma*\gamma^\prime} H_{x;\tau * \tau^\prime}^{\mathrm{refl}}(M) \longrightarrow T_\gamma H_{x;\tau}^{\mathrm{refl}}(M)
\end{equation*}
is an isometry when restricted to the orthogonal complement of its kernel. The kernel of $d\mathrm{res}|_{\gamma*\gamma^\prime}$ is the set of Jacobi fields that are zero up to time $t$. Therefore, looking at the formula \eqref{DiscretizedMetric} for the metric, the orthogonal complement of the kernel is the set of Jacobi fields $X$ such that
\begin{equation*}
 \nabla_s X\bigl((\tau*\tau^\prime)_{j-1}+\bigr) = 0, ~~~~~ j = N+1, \dots, N+N^\prime.
\end{equation*}
Hence, if $X$ is such a vector field in the orthogonal complement, then
\begin{equation*}
  \|X\|_{\Sigma\text{-}H^1}^2 = \sum_{j=1}^{N+N^\prime} \bigl| \nabla_s X\bigl((\tau*\tau^\prime)_{j-1}+\bigr)\bigr|^2 \Delta_j\tau = \sum_{j=1}^{N} \bigl| \nabla_s X\bigl(\tau_{j-1}+\bigr)\bigr|^2 \Delta_j\tau = \bigl\|X|_{[0, t]}\bigr\|_{\Sigma\text{-}H^1}^2
\end{equation*}
so that because $d\mathrm{res} \,X = X|_{[0, t]}$, $d\mathrm{res}$ is indeed an isometry when restricted to this subspace.
From the co-area formula \cite[Thm.~III..2]{ChavelRiemannian}, we obtain
\begin{equation*}
  \int_{H_{x; \tau*\tau^\prime}^{\mathrm{refl}}(M)} F(\gamma)\, \dd^{\Sigma\text{-}H^1} \gamma = \int_{H_{x; \tau}^{\mathrm{refl}}(M)}\int_{\res^{-1}(\gamma)} F(\eta) \,\dd^{\Sigma\text{-}H^1} \eta \,\dd^{\Sigma\text{-}H^1} \gamma,
\end{equation*}
so the proof is finished if we show that the map $\mathrm{ext}_\gamma$ given by
\begin{equation*}
\mathrm{ext}_\gamma: H_{\gamma(t);\tau^\prime}(M) \longrightarrow \mathrm{res}^{-1}(\gamma), ~~~~~~~ \gamma^\prime \mapsto \gamma * \gamma^\prime
\end{equation*}
is an isometry. So let $X \in T_{\gamma^\prime} H_{\gamma(t);\tau^\prime}(M)$. Then 
\begin{equation*}
\bigl(d \mathrm{ext}_\gamma|_{\gamma^\prime} X\bigr)(s) = \begin{cases} 0 & 0 \leq s \leq t \\ X(s-t) & t < s \leq t+ t^\prime \end{cases}
\end{equation*}
which implies $\|d \mathrm{ext}_\gamma|_{\gamma^\prime} X\|_{\Sigma\text{-}H^1} = \|X\|_{\Sigma\text{-}H^1}$. Thus $\mathrm{ext}_\gamma$ is indeed an isometry for every $\gamma$ and the lemma follows.
\end{proof}

\subsection{Reflecting Path Integrals} \label{SectionReflectingPathIntegrals}

We can now give a path integral formula for the heat operator in the case that $M$ is a compact manifold with boundary.

\begin{theorem}[The Heat Operator as a Reflecting Path Integral] \label{ThmPathIntegralNonSmooth}
~~Let $L$ be a self-adjoint Laplace type operator, acting on sections of a metric vector bundle $\V$ over a compact Riemannian manifold $M$ with boundary, endowed with involutive boundary condition $B$. Let $\mathcal{P}_B(\gamma)$ denote the $B$-path-ordered exponential, induced by $L$ as in Def.~\ref{DefBPathOrderedExponential}. For a partition $\tau = \{0 = \tau_0 < \tau_1 < \dots < \tau_N = t\}$, define
\begin{equation} \label{DefPtau}
  P_\tau u(x) := \fint_{H_{x;\tau}^{\mathrm{refl}}(M)} \exp\left(-\frac{1}{4} \int_0^t \bigl|\dot{\gamma}(s)\bigr|^2 \dd s\right)  \mathcal{P}_B(\gamma)^{-1} u\bigl(\gamma(t)\bigr) \, \dd\gamma,
\end{equation}
where the slash over the integral sign denotes divison by $(4\pi)^{\mathrm{dim}(H_{x;\tau}(M))/2}$. Then
\begin{equation} \label{PathIntegralFormulaNonSmooth}
  e^{-tL}u = \lim_{|\tau|\rightarrow 0} P_\tau u,
\end{equation}
where the limit goes over any sequence of partitions the mesh of which tends to zero and the section $u$ is in any of the spaces $C^0(M, \V)$ or $L^p(M, \V)$, $1 \leq p < \infty$ (with convergence in the respective space).
\end{theorem}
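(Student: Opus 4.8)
The plan is to realize the operators $P_\tau$ as time-ordered products of a single one-step operator and then invoke the Chernoff product formula for proper families (Prop.~\ref{PropChernoff}). For $s > 0$ let $S(s) := P_{\{0, s\}}$ be the operator \eqref{DefPtau} for the trivial partition of $[0, s]$, and set $S(0) := \id$. Combining the co-area formula of Lemma~\ref{LemmaConcatenation} with the multiplicativity \eqref{MultiplicativityPB} of the $B$-path-ordered exponential and the additivity of the energy $\int_0^t|\dot{\gamma}|^2\,\dd s$ under concatenation, I would first show that $P_{\tau*\tau^\prime} = P_\tau P_{\tau^\prime}$, and hence that for any partition $\tau = \{0 < \tau_1 < \dots < \tau_N = t\}$ one has $P_\tau = S(\Delta_1\tau)\,S(\Delta_2\tau)\cdots S(\Delta_N\tau)$. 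The dimension count $\dim H^{\mathrm{refl}}_{x;\tau} = \sum_j \dim H^{\mathrm{refl}}_{\cdot;\{0,\Delta_j\tau\}}$ guarantees that the normalization factors multiply consistently. With this identity, \eqref{PathIntegralFormulaNonSmooth} becomes exactly the assertion that the time-ordered products of $(S(s))_{s\geq 0}$ converge strongly to $e^{-tL}$, which is the conclusion of the Chernoff theorem once $(S(s))$ is shown to be a proper family on each of $C^0(M,\V)$ and $L^p(M,\V)$.

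To put the one-step operator into tractable form I would use that a single reflected geodesic is determined by its initial velocity $v$, which gives a global parametrization of $H^{\mathrm{refl}}_{x;\{0,s\}}(M)$ by $\Omega_{s,x}\subseteq T_xM$. Since the discretized $H^1$ metric \eqref{DiscretizedMetric} for a single segment reads $|X|^2_{\Sigma\text{-}H^1} = |\nabla_s X(0{+})|^2\,s$ and the tangent map of the parametrization is $X \mapsto \nabla_s X(0{+})$, the metric in the velocity coordinate is the constant tensor $s\cdot g_x$, so the Riemannian volume is simply $s^{n/2}\,\dd v$ with no curvature correction. After the substitution $w = \sqrt{s}\,v$ this turns $S(s)$ into a normalized Gaussian average
\begin{equation*}
  S(s)u(x) = (4\pi)^{-n/2}\int_{\sqrt{s}\,\Omega_{s,x}} \e^{-|w|^2/4}\, \mathcal{P}_B\bigl(\gamma_{w/\sqrt{s}}\bigr)^{-1}\, u\bigl(\exp_x(\sqrt{s}\,w)\bigr)\,\dd w ,
\end{equation*}
with $(4\pi)^{-n/2}\int_{\R^n}\e^{-|w|^2/4}\,\dd w = 1$.

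Two properties then have to be verified. First, the estimate $\|S(s)\| \leq \e^{Cs}$ on $C^0$ and on each $L^p$. The $C^0$ bound is immediate from the Gaussian normalization together with $|\mathcal{P}_B(\gamma)^{-1}| \leq \e^{Cs}$ (as $B$ is orthogonal and $V$ is bounded). For the $L^p$ bound I would obtain the $L^1$ estimate by passing to the integral over $TM$ and using that the broken billiard flow $\Theta_s$ preserves Liouville measure and the speed $|v|$: the substitution $(y, v^\prime) = \Theta_s(x, v)$ trades the endpoint map $\pi\circ\Theta_s$ for the identity and yields $\|S(s)\|_{L^1\to L^1}\leq \e^{Cs}$, whence $\|S(s)\|_{L^p\to L^p}\leq \e^{Cs}$ by interpolation with the $L^\infty$ bound. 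Second, the differentiability at $s=0$: for $u$ in the core $C^\infty_B(M,\V)$ one performs the Gaussian moment expansion of the displayed formula. The first moment vanishes by parity; the second moment of the normalized Gaussian equals $2\delta_{ij}$ and converts the covariant Hessian term into $-s\,\nabla^*\nabla u(x)$, while $\mathcal{P}_B(\gamma)^{-1} = \id - s\,V(x) + o(s)$ by \eqref{ODEforEinverse} contributes $-s\,V(x)u(x)$; together these give $S(s)u = u - s\,Lu + o(s)$.

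The main obstacle is establishing this last expansion uniformly up to the boundary, i.e.\ controlling the contributions of base points $x$ lying within distance $O(\sqrt{s})$ of $\partial M$, where the relevant geodesics actually reflect and the insertions of $B$ in $\mathcal{P}_B$ become active. Here the involutive hypotheses are essential: because $B^2 = \id$ and $B$ is parallel, the reflected Gaussian integral splits along the eigenbundle decomposition $\V|_{\partial M} = \W^+\oplus\W^-$, and the boundary conditions $u|_{\partial M}\in\W^+$, $\nabla_\n u|_{\partial M}\in\W^-$ for $u \in C^\infty_B$ are exactly what is needed for the reflected contributions to reassemble into the self-adjoint realization of $L$ with domain $H^2_B$, rather than spoiling the limit. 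Showing that this boundary-layer analysis produces an error that is $o(s)$ in the $C^0$ norm, and is controlled in $L^p$ (where the boundary layer has measure $O(\sqrt{s})$), is the technical heart of the argument; once it is in place, Prop.~\ref{PropChernoff} delivers \eqref{PathIntegralFormulaNonSmooth} on all the stated spaces simultaneously.
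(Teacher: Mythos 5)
Your skeleton coincides with the paper's: the product identity $P_\tau = S(\Delta_1\tau)\cdots S(\Delta_N\tau)$ via the co-area formula of Lemma~\ref{LemmaConcatenation} and multiplicativity \eqref{MultiplicativityPB}, the velocity parametrization of the one-step operator with flat volume $s^{n/2}\,\dd v$, the $C^0$ and $L^p$ norm bounds via Gronwall and Liouville-measure invariance of the broken billiard flow, and finally Chernoff (Prop.~\ref{PropChernoff}). However, there is a genuine gap at exactly the point you flag as ``the technical heart'': the verification of Chernoff's condition (iii), i.e.\ that $\tfrac1s(S(s)u-u)\to -Lu$, is never actually carried out near the boundary, and the route you hint at would not close it. In $C^0(M,\V)$ the convergence must be \emph{uniform} on $M$; for a base point $x$ at distance of order $\sqrt{s}$ from $\partial M$, an $O(1)$ fraction of the Gaussian mass lies on velocities whose geodesics reflect, so there is no smallness to exploit there — a ``boundary layer of measure $O(\sqrt{s})$'' argument can at best help in $L^p$, never in $C^0$, and even pointwise the error in the layer must be shown to be $o(s)$ by an actual cancellation mechanism, not by a measure count. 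Relatedly, your displayed formula writes the endpoint as $u\bigl(\exp_x(\sqrt{s}\,w)\bigr)$; for reflecting paths the endpoint is $\gamma_{w/\sqrt{s}}(s)$, not a value of the Riemannian exponential, so the Gaussian moment expansion of $u\circ\exp_x$ that produces $-s\,\nabla^*\nabla u(x)$ is invalid precisely on the set of velocities at issue.

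The paper closes this gap with a device your proposal lacks: the time rescaling $Q_t := P_{t^2}$ and a regularity statement along the flow (Lemma~\ref{LemmaPointwiseC2}). For $u \in C^2_B(M,\V)$ and fixed $v$, the function $f(t,v) = [\gamma_v\|_0^t]_B^{-1}u\bigl(\gamma_v(t)\bigr)$ is $C^{1,1}$ in $t$: the involutive boundary condition ($Bu|_{\partial M}=u|_{\partial M}$, $B\nabla_\n u|_{\partial M}=-\nabla_\n u|_{\partial M}$, $B$ parallel) combines with the velocity reflection $\dot\gamma_v(\sigma_j-)=R\dot\gamma_v(\sigma_j+)$ to make $\partial f/\partial t$ continuous across every reflection time, with $|\partial^2 f/\partial t^2|\le\|\nabla^2u\|_\infty|v|^2$ almost everywhere. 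Taylor expansion with integral remainder in the rescaled time $t$, together with the parity vanishing of the first moment, then yields $Q_t u = u - t^2 Lu + o(t^2)$, i.e.\ $P_s u = u - sLu + o(s)$, uniformly in $x$ — reflecting and non-reflecting velocities are treated by one and the same formula, and no boundary-layer decomposition is needed. This (or an equivalent argument showing how $B$ produces the cancellation across reflections) is the step your proof must supply; as written, the conclusion is asserted where it needs to be proved.
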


\begin{remark}
  Of course, the definition \eqref{DefPtau} makes sense pointwise only for $u \in C^0(M, \V)$. However, we will show that each operator $P_\tau$ is bounded with respect to the $L^p$ norm, $1 \leq p < \infty$, so that it extends uniquely to a bounded operator on $L^p(M, \V)$ (also denoted by $P_\tau$), because $C^0(M, \V)$ is dense in $L^p(M, \V)$. For a general $u \in L^p(M, \V)$, $P_\tau u$ is defined by formula \eqref{DefPtau} almost everywhere.
\end{remark}

\begin{example}[The Laplace-Beltrami Operator]
If we have $L = \Delta + V$ for a potential $V$ with Dirichlet boundary conditions (i.e.\ $B\equiv -1$), then we have $\mathcal{P}_B(\gamma) = (-1)^{\mathrm{refl}(\gamma)}$, where $\mathrm{refl}(\gamma)$ denotes the number of reflections, i.e.\ the number of times $0 \leq s \leq t$ such that $\gamma(s) \in \partial M$. We therefore arrive at the formula
\begin{equation*}  
u(t, x) = \lim_{|\tau|\rightarrow 0} \fint_{H_{x;\tau}(M)} \exp \left( - \frac{1}{4}\int_0^t \bigl|\dot{\gamma}(s)\bigr|^2 \dd s - \int_0^t V\bigl(\gamma(s)\bigr) \dd s \right) u_0\bigl(\gamma(t)\bigr) (-1)^{\mathrm{refl}(\gamma)} \dd\gamma
\end{equation*}
from the introduction.
If we consider the Neumann boundary conditions, then $B=1$ and $\mathcal{P}_B(\gamma) \equiv 1$, so the factor $(-1)^{\mathrm{refl}(\gamma)}$ has to be replaced by one.
\end{example}

The proof of Thm.~\ref{ThmPathIntegralNonSmooth} is based on the following result, which is due to Chernoff \cite{chernoff}. In the following form, the it can be found in \cite[Prop.~1]{WeizsaeckerSmolyanov05} and \cite[Thm.~2.8]{bpapproximation}, where it was already used to approximate the heat semigroup on closed manifolds.

\begin{proposition}[Chernoff] \label{PropChernoff}
Let $(P_t)_{t\geq 0}$ be a family of bounded linear operators on a Banach space $E$ and assume that $P_t$ is a {\em proper family}, i.e.\ 
\begin{enumerate}[(i)]
\item $\|P_t\| = 1 + O(t)$ as $t \rightarrow 0$;
\item $P_t$ is strongly continuous with $P_0 = \id$;
\item $P_t$ has an infinitesimal generator, meaning that there exists a (possibly unbounded) closed operator $L$ on $E$ with dense domain $\mathrm{dom}(L)$ that generates a strongly continuous semigroup $e^{-tL}$ and such that
\begin{equation*}
  \frac{1}{t} \bigl( P_t u - u\bigr) \longrightarrow -Lu
\end{equation*}
as $t \rightarrow 0$ for all $u \in E$ of the form $u = e^{-\varepsilon L} v$ with $\varepsilon > 0$ and $v \in \mathrm{dom}(L)$.
\end{enumerate}
Then we have
\begin{equation*}
  \lim_{|\tau|\rightarrow 0} P_{\Delta_1 \tau} \cdots P_{\Delta_N\tau} u = e^{-tL}u,
\end{equation*}
for any $u \in E$, where the limit goes over any sequence of partitions $\tau$ of the interval $[0, t]$ the mesh of which tends to zero.
\end{proposition}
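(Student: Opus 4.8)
The plan is to compare the approximating product with the semigroup itself, exploiting that the latter factorizes over the very same partition. Write $S(s):=e^{-sL}$, so that $S(\Delta_1\tau)\cdots S(\Delta_N\tau)=S(t)$ by the semigroup property, and abbreviate $A_j:=P_{\Delta_j\tau}$, $B_j:=S(\Delta_j\tau)$. Condition (i) gives $\|A_j\|\le e^{C\,\Delta_j\tau}$ once the mesh is small, hence $\|A_1\cdots A_j\|\le e^{Ct}$ uniformly in the partition, while strong continuity bounds $\|S(s)\|\le M$ for $s\in[0,t+1]$. With these two stability bounds a routine $3\varepsilon$-argument reduces the claim to initial data $u=e^{-\varepsilon L}v$ with $\varepsilon>0$ and $v\in\mathrm{dom}(L)$, since such $u$ are dense in $E$. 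For these $u$ I telescope:
\[
A_1\cdots A_N-B_1\cdots B_N=\sum_{j=1}^N A_1\cdots A_{j-1}\,(A_j-B_j)\,B_{j+1}\cdots B_N,
\]
and, applying this to $u$ and using $B_{j+1}\cdots B_N\,u=S(t-\tau_j+\varepsilon)v$, I arrive at
\[
\bigl\|\bigl(P_{\Delta_1\tau}\cdots P_{\Delta_N\tau}-S(t)\bigr)u\bigr\|\le e^{Ct}\sum_{j=1}^N\bigl\|\bigl(P_{\Delta_j\tau}-S(\Delta_j\tau)\bigr)S(r_j)v\bigr\|,\qquad r_j:=t-\tau_j+\varepsilon\in[\varepsilon,t+\varepsilon].
\]

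Thus everything reduces to showing that the right-hand sum tends to zero. Writing each summand as $\Delta_j\tau\cdot\tfrac1{\Delta_j\tau}\|(P_{\Delta_j\tau}-S(\Delta_j\tau))S(r_j)v\|$ and using $\sum_j\Delta_j\tau=t$, it suffices to prove that
\[
\rho(s):=\sup_{r\in[\varepsilon,t+\varepsilon]}\tfrac1s\bigl\|\bigl(P_s-S(s)\bigr)S(r)v\bigr\|\longrightarrow0\quad(s\to0).
\]
The orbit $\{S(r)v:r\in[\varepsilon,t+\varepsilon]\}$ is compact and consists of vectors $e^{-rL}v$ with $r>0$; on each such vector $w$ hypothesis (iii) gives $\tfrac1s(P_s-I)w\to-Lw$, while the definition of the generator gives $\tfrac1s(S(s)-I)w\to-Lw$, so that $\tfrac1s(P_s-S(s))w\to0$ pointwise. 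The difficulty --- and the heart of the matter --- is that this pointwise, first-order information does not upgrade to $\rho(s)\to0$: the operators $\tfrac1s(P_s-S(s))$ blow up in norm like $s^{-1}$, so they are not equicontinuous on the compact orbit and Arzel\`a--Ascoli does not apply. Splitting $(P_s-S(s))=(P_s-I)-(S(s)-I)$ does not help either, since each piece is only $O(s)$ and summing the two bounds over $j$ yields an $O(t)$ quantity rather than something small; the required smallness lives entirely in the second-order cancellation between $P_s$ and $S(s)$, about which (iii) says nothing directly.

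The way around this is Chernoff's square-root estimate \cite{chernoff}, which bypasses the singular difference quotient altogether. After rescaling to absorb the $e^{Cs}$ growth one may treat the $P_s$ as contractions, and then for any contraction $V$ and $x\in E$ one has
\[
\bigl\|V^n x-e^{n(V-I)}x\bigr\|\le\sqrt n\,\bigl\|(V-I)x\bigr\|,
\]
proved by expanding $e^{n(V-I)}=e^{-n}\sum_k\tfrac{n^k}{k!}V^k$, estimating $\|(V^n-V^k)x\|\le|n-k|\,\|(V-I)x\|$, and bounding $e^{-n}\sum_k\tfrac{n^k}{k!}|n-k|\le\sqrt n$ via the mean absolute deviation of a Poisson$(n)$ variable (Cauchy--Schwarz). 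For a uniform partition $V=P_{t/n}$ this gives $\|(P_{t/n}^n-e^{tA_n})x\|\le\tfrac{t}{\sqrt n}\|A_n x\|\to0$, where $A_n:=\tfrac nt(P_{t/n}-I)$ is a bounded operator with $A_n x\to-Lx$ on the core. The crucial point is that the product is now replaced by a genuine exponential $e^{tA_n}$, which only ever sees $A_n$ through its bounded, convergent action, and a Trotter--Kato continuity argument using the uniform stability bound yields $e^{tA_n}x\to e^{-tL}x$. For the non-uniform partitions considered here I would run the same mechanism stepwise inside the telescoped sum, routing each factor $P_{\Delta_j\tau}$ through $\exp(\Delta_j\tau\,A_{\Delta_j\tau})$ so that the errors are controlled by the square-root bound and sum to zero as the mesh shrinks, the partial products staying bounded by $e^{Ct}$ throughout. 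The telescoping, the stability estimates and the density reduction are routine; the single genuine obstacle is this passage from the pointwise first-order hypothesis (iii) to a summable error estimate uniform over the partition, which is exactly what Chernoff's estimate supplies.
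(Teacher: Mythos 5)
Before comparing, note a point of reference: the paper does not prove this proposition at all --- it imports it from the literature, attributing the result to Chernoff \cite{chernoff} and citing \cite{WeizsaeckerSmolyanov05} and \cite{bpapproximation} for the exact form stated (in particular for the convergence along \emph{arbitrary} partitions with mesh tending to zero). So your attempt can only be measured against those proofs. Your first half is correct and is indeed how such proofs begin: the stability bound $\|P_{\Delta_1\tau}\cdots P_{\Delta_j\tau}\|\le e^{Ct}$ from (i), the reduction by density and uniform boundedness to vectors $u=e^{-\varepsilon L}v$ with $v\in\mathrm{dom}(L)$, the telescoping against the semigroup factored over the same partition, and the resulting reduction to the uniform-along-the-orbit statement $\rho(s)\to0$, i.e.\ smallness of $\frac{1}{s}\|(P_s-e^{-sL})e^{-rL}v\|$ uniformly in $r\in[\varepsilon,t+\varepsilon]$. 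Your diagnosis that hypothesis (iii), being pointwise, does not formally yield this uniformity (no equicontinuity, since $\frac{1}{s}(P_s-e^{-sL})$ has norm of order $1/s$) is also correct, and it identifies exactly the extra lemma a complete proof of the stated proposition must supply.

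The genuine gap is in your final paragraph. Chernoff's square-root estimate plus Trotter--Kato proves the proposition only for equidistant partitions, where the product is a power $P_{t/n}^n$ and the comparison object is the \emph{single} exponential $e^{tA_n}$ with $A_n=\frac{n}{t}(P_{t/n}-I)$; that is what the $\sqrt{n}$ gain and the Trotter--Kato theorem require. The proposition as stated --- and as used in the proof of Theorem~\ref{ThmPathIntegralNonSmooth}, whose conclusion is claimed for any sequence of partitions --- concerns non-uniform partitions, and there your mechanism collapses. Routing each single factor $P_{\Delta_j\tau}$ through $\exp(\Delta_j\tau A_{\Delta_j\tau})$ is the case $n=1$ of the square-root lemma, which has no gain at all: it gives an error of size $\|(P_{\Delta_j\tau}-I)x\|=O(\Delta_j\tau)$ per factor, hence $O(t)$ after summation --- precisely the failure mode you yourself describe two paragraphs earlier for the naive splitting. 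Moreover, the comparison object $\exp(\Delta_1\tau A_{\Delta_1\tau})\cdots\exp(\Delta_N\tau A_{\Delta_N\tau})$ is then a product of exponentials of \emph{different} bounded operators, not a semigroup evaluated at time $t$, so the Trotter--Kato approximation theorem does not apply to it; comparing it with $e^{-tL}$ forces another telescoping and reinstates the identical uniformity problem. So the single obstacle you correctly isolated is not bypassed: modulo routine details you have proved the equidistant case, not the statement. The repair is to establish the uniform-on-orbit version of (iii), namely $\sup_{r\in[\varepsilon,t+\varepsilon]}\|\frac{1}{s}(P_s-I)e^{-rL}v+Le^{-rL}v\|\to0$ as $s\to0$, after which your own telescoping closes immediately and the square-root machinery becomes unnecessary; this is the step to which the cited references devote their effort. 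It is also worth observing that in the application made in this paper such uniformity is available, because property (iii) is verified in Prop.~\ref{PropProperFamily} through Taylor estimates controlled by $\|\nabla^2u\|_\infty$, and the orbit $\{e^{-rL}v : r\ge\varepsilon\}$ is bounded in $C^{2,\alpha}$ by parabolic smoothing; so a version of the proposition with a uniform hypothesis (iii) would suffice for Theorem~\ref{ThmPathIntegralNonSmooth} and would make your telescoping argument complete.
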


We will subsequently prove the following result:

\begin{proposition} \label{PropProperFamily}
Set for $t>0$ and $u \in C^0(M, \V)$
\begin{equation*}
P_t u := P_{\{0<t\}} u,
\end{equation*}
where $\{0 < t\}$ is the trivial partition of the interval $[0, t]$ and the right hand side was defined in \eqref{DefPtau}. Furthermore, set $P_0u := u$. Then $P_t$ is a proper family on $C^0(M, \V)$ with the Laplace type operator $L$ as infinitesimal generator. Furthermore, $P_t$ extends uniquely to a proper family on $L^p(M, \V)$, for $1 \leq p < \infty$, with $L$ as infinitesimal generator. 
\end{proposition}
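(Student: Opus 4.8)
The plan is to verify the three conditions of Chernoff's theorem (Prop.~\ref{PropChernoff}) for the family $P_t$, reducing everything to properties of the broken billiard flow $\Theta_t$ via a change of variables. The key first step is to rewrite the integral \eqref{DefPtau} for the trivial partition over the vector space $T_xM$ rather than over $H_{x;\{0<t\}}^{\mathrm{refl}}(M)$. Using the reflected anti-development map $U_R$ as a global chart, and recalling that for the trivial partition a reflected geodesic is determined by its initial vector $v \in T_xM$, I would express $P_t u(x)$ as an integral over $\Omega_{t,x} \subseteq T_xM$ of the Gaussian weight $\exp(-\tfrac14 t |v|^2)$ (since $|\dot\gamma(s)|^2 \equiv |v|^2$ is constant along a reflected geodesic) times $\mathcal{P}_B(\gamma_v)^{-1} u(\gamma_v(t))$, against a suitable density coming from the Jacobian of $U_R$. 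The rescaling property \eqref{RescalingPropertyFlow} then suggests the substitution $v \mapsto \sqrt{t}\, v$, which converts the time-$t$ flow into the time-$\sqrt{t}$ flow and turns the Gaussian into the fixed standard Gaussian $\exp(-\tfrac14 |v|^2)$, with the normalization $(4\pi)^{-n/2}$ making $P_0 = \mathrm{id}$ exact.

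For condition (i), the norm bound, I would estimate $\|P_t u\|$ pointwise using that $\mathcal{P}_B(\gamma)$ is the path-ordered exponential of the bounded potential $V$ together with finitely many applications of the involution $B$ with $\|B\| = 1$; over a short time the exponential contributes a factor $1 + O(t)$, and the Gaussian integrates (after normalization) to something $1 + O(t)$ because $\Omega_{t,x}$ exhausts $T_xM$ up to a set whose Gaussian measure is $O(t)$ as $t \to 0$ by Lemma~\ref{LemmaOpenAndDense}. This gives $\|P_t\|_{C^0} = 1 + O(t)$; the $L^p$ bound would follow from a similar pointwise estimate combined with the volume-preservation of $\Theta_t$, which lets me control the $L^p$ norm of $P_t u$ by that of $u$ after integrating in $x$ and applying Fubini together with the substitution. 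Condition (ii), strong continuity with $P_0 = \mathrm{id}$, follows from dominated convergence in the rescaled integral: as $t \to 0$ the flow $\gamma_v(\sqrt t)$ returns to $x$, the path-ordered exponential tends to the identity, and continuity of $u$ (resp.\ $L^p$-continuity of translation) closes the argument.

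The main obstacle is condition (iii), identifying the infinitesimal generator as $L = \nabla^*\nabla + V$. Here I would perform a short-time Taylor expansion of the rescaled integrand in powers of $\sqrt t$, integrating against the standard Gaussian on $T_xM$. It suffices to treat $u$ of the form $e^{-\varepsilon L} v$, which is smooth and satisfies the boundary condition, so I may expand $u(\gamma_v(\sqrt t))$ along the geodesic: the first-order term is odd in $v$ and integrates to zero, while the second-order term produces the Gaussian second moment, yielding $\tfrac14 \cdot 2 \,\Delta u = $ the trace of the Hessian, i.e.\ the $\nabla^*\nabla$ part; simultaneously, the expansion of $\mathcal{P}_B(\gamma_v)^{-1}$ from \eqref{ODEforE} contributes the $+V$ term, giving $\tfrac1t(P_t u - u) \to -(\nabla^*\nabla + V)u = -Lu$. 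The delicate point is that reflected geodesics near the boundary, together with the insertion of $B$ in $\mathcal{P}_B$, must reproduce exactly the boundary condition \eqref{ReflectingBoundaryCondition} rather than spoiling the expansion; this is precisely where $B^2 = \mathrm{id}$, the covariant constancy of $B$, and the splitting \eqref{BoundarySplitting} into $\mathcal{W}^\pm$ enter, ensuring that the reflected contributions from outward-pointing $v$ combine correctly with the sign/decoration data and do not produce an uncontrolled boundary term. I expect this boundary analysis, carried out uniformly in $x$ up to $\partial M$, to be the technical heart of the argument.
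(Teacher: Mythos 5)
Your overall strategy coincides with the paper's: rewrite $P_t$ as a Gaussian integral over $T_xM$ via $v \mapsto \gamma_v$, rescale $v \mapsto \sqrt{t}\,v$ (the paper's operator $Q_t := P_{t^2}$), obtain condition (i) from a Gronwall bound on $\mathcal{P}_B(\gamma)^{-1}$ together with Jensen's inequality and the volume-preservation of the broken billiard flow, condition (ii) by dominated convergence, and condition (iii) from Gaussian second moments plus a Duhamel expansion of $\mathcal{P}_B(\gamma)^{-1}$ to split off the potential $V$. One small misreading: by Lemma~\ref{LemmaOpenAndDense} the set $\Omega_{t,x}$ has \emph{full} measure in $T_xM$ for every $t$, so the normalized Gaussian integrates to exactly $1$; there is no ``$O(t)$ loss from the complement'' to estimate. (Also, Chernoff's condition (ii) asks for strong continuity at every $t_0$, not only at $t=0$, though your dominated-convergence argument does extend.)

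The genuine gap is in condition (iii), exactly at the point you flag as ``the technical heart'' and then leave as an expectation. As stated, your plan --- Taylor expand $u(\gamma_v(\sqrt{t}))$ ``along the geodesic'' pointwise in $v$, kill the odd term, integrate the second-order term --- fails where the boundary matters: for $x \in \partial M$, or at distance $O(\sqrt{t})$ from $\partial M$, the set of $v \in T_xM$ whose geodesic reflects before time $\sqrt{t}$ has Gaussian measure bounded below (roughly one half), so reflected trajectories contribute at order one and the naive interior expansion cannot simply be integrated. The paper's resolution is Lemma~\ref{LemmaPointwiseC2}: for $u \in C^2_B(M,\V)$ the function $f(t,v) = [\gamma_v\|_0^t]_B^{-1}u\bigl(\gamma_v(t)\bigr)$ is $C^{1,1}$ in $t$, because at a reflection time $\sigma_j$ the boundary condition yields $B\nabla_{\dot\gamma_v(\sigma_j+)}u\bigl(\gamma_v(\sigma_j)\bigr) = \nabla_{\dot\gamma_v(\sigma_j-)}u\bigl(\gamma_v(\sigma_j)\bigr)$ (computation \eqref{BCalculation}, using $u|_{\partial M} \in C^\infty(\partial M,\W^+)$, $\nabla_\n u|_{\partial M} \in C^\infty(\partial M,\W^-)$ and parallelism of the splitting), and this factor of $B$ is exactly absorbed by the extra $B$ appearing in $[\gamma_v\|_0^t]_B^{-1}$ at the reflection; moreover $\bigl|\partial_t^2 f(t,v)\bigr| \leq \|\nabla^2 u\|_\infty |v|^2$ almost everywhere, a Gaussian-integrable dominating function. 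It is this regularity of the \emph{transported} integrand, not of $u\circ\gamma_v$ itself, that justifies differentiating under the integral and Taylor expanding $Q_t u(x)$ as a function of $t$ with integral remainder, $Q_tu(x) = u(x) + t^2\int_0^1(1-s)\,Q''_{ts}u(x)\,\dd s$, uniformly in $x$ up to the boundary; the generator then falls out of $Q''_0u = 2\,\tr\nabla^2 u = -2Lu$ (for $V=0$). Without this cancellation argument, or an equivalent one, your expansion is unjustified precisely on the set of trajectories that distinguish the boundary problem from the closed case.
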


Using this proposition, we can prove the path integral formula above.

\begin{proof}[of Thm.~\ref{ThmPathIntegralNonSmooth}]

By Prop.~\ref{PropChernoff}, we have
\begin{equation*}
  \lim_{|\tau|\rightarrow 0} P_{\Delta_1 \tau} \cdots P_{\Delta_N\tau} u = e^{-tL} u,
\end{equation*}
where $P_t$ is the proper family from Prop.~\ref{PropProperFamily}. We now show by induction on the length $N$ of the partition that 
\begin{equation} \label{ClaimEquality}
P_{\Delta_1 \tau} \cdots P_{\Delta_N\tau} u = P_\tau u
\end{equation}
 for any partition $\tau = \{0 = \tau_0 < \tau_1 < \dots < \tau_N = t\}$ and any $u \in L^1(M, \V)$, where $P_\tau$ is the operator defined as in \eqref{DefPtau}. This is so for $N=1$ by definition. Suppose now that the result is true for some $N\geq 1$. For abbreviation, set
\begin{equation*}
  E(\gamma) := \frac{1}{4} \int_0^t \bigl|\dot{\gamma}(s)\bigr|^2 \dd s.
\end{equation*}
If then $\tau = \{0 = \tau_0 < \tau_1 < \dots < \tau_N = t\}$ is some partition of length $N$ and $\tau^\prime = \{ 0 = \tau^\prime_0 < \tau^\prime_1 < \dots < \tau_{N^\prime}^\prime\}$ is a partition of length $N^\prime \leq N$ (e.g.\ $N^\prime = 1$), then for $x \in M \setminus \partial M$, 
\begin{equation*}
\begin{aligned}
  P_\tau P_{\tau^\prime} u(x) &= (4\pi)^{-n(N+N^\prime)/2} \!\!\int_{H_{x;\tau}(M)} \int_{H_{\gamma(t);\tau^\prime}(M)} \!\!\!\!\!\!\!\!\!\!\!e^{-E(\gamma) - E(\gamma^\prime)} \mathcal{P}_B(\gamma)^{-1}\mathcal{P}_B(\gamma^\prime)^{-1}u\bigl(\gamma^\prime(t^\prime)\bigr) \dd \gamma^\prime \dd \gamma\\
 &= (4\pi)^{-n(N+N^\prime)/2}\!\! \int_{H_{x;\tau}(M)} \int_{H_{\gamma(t);\tau^\prime}(M)} \!\!\!\!\!\!\!\!\!\!\!e^{-E(\gamma*\gamma^\prime)} \mathcal{P}_B(\gamma*\gamma^\prime)^{-1}u\bigl((\gamma*\gamma^\prime)(t+t^\prime)\bigr) \dd \gamma^\prime \dd \gamma\\
 &= \fint_{H_{x;\tau*\tau^\prime}(M)}  \!\!\!\!\!\!\!\!\!e^{-E(\gamma)} \mathcal{P}_B(\gamma)^{-1}u\bigl(\gamma(t+t^\prime)\bigr)  \dd \gamma = P_{\tau*\tau^\prime}u(x),
\end{aligned}
\end{equation*}
where we always integrate with the respect to the discrete $H^1$ volume.
Here we used the multiplicativity of $\mathcal{P}_B(\gamma)$ (see \eqref{MultiplicativityPB}) and additivity $E(\gamma) + E(\gamma^\prime) = E(\gamma*\gamma^\prime)$ of the energy, as well as the co-area formula from Lemma~\ref{LemmaConcatenation}. A similar calculation can be made in the case $x \in \partial M$.

This shows that if \eqref{ClaimEquality} holds for partitions $\tau$ of length $N$, then it also holds for partitions $\tau$ of length less or equal to $2N$. In total, \eqref{ClaimEquality} holds for all partitions.
\end{proof}

The remainder of this section is dedicated to giving a proof of Prop.~\ref{PropProperFamily}. This is split up into several lemmas. We generally assume that we are in the setup of Thm.~\ref{ThmPathIntegralNonSmooth}, i.e.\ $L$ is a self-adjoint Laplace type operator with involutive boundary conditions $B$, acting on sections of a metric vector bundle $\V$ over a compact Riemannian manifold $M$ with boundary. By \eqref{LaplaceDecomposition}, we have $L = \nabla^*\nabla + V$ for a unique metric connection $\nabla$ on $\V$ and a symmetric endomorphism field $V \in C^\infty(M, \End(\V))$.

\begin{lemma}\label{LemmaGronwallForP}
  Let $\alpha$ be a bound on the pointwise operator norm of $V$. Then for the path-ordered integral $\mathcal{P}_B(\gamma)$ determined by $L$, we have
  \begin{equation*}
    |\mathcal{P}_B(\gamma)^{-1}| \leq e^{(b-a)\alpha}
  \end{equation*}
  where $\gamma: [a, b] \longrightarrow M$ is any absolutely continuous path and $|\,-\,|$ denotes the pointwise operator norm.
\end{lemma}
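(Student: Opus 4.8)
The plan is to reduce the bound on $|\mathcal{P}_B(\gamma)^{-1}|$ to a product of single-segment estimates. Writing $\sigma_0 = a$ and $\sigma_{k+1} = b$, the definition \eqref{BPathOrderedExponential} exhibits $\mathcal{P}_B(\gamma)$ as an alternating product of the ordinary path-ordered exponentials $\mathcal{P}(\gamma|_{[\sigma_{j-1},\sigma_j]})$ and copies of $B$, together with the leading factor $A \in \{\id, B\}$. Inverting reverses the order and replaces each factor by its inverse; since $B$ is a symmetric involution its eigenvalues are $\pm 1$, so $B$ is orthogonal (resp.\ unitary) with $B^{-1} = B$ and operator norm $|B| = 1$, and likewise $|A| = 1$. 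Submultiplicativity of the operator norm therefore gives
\[
  |\mathcal{P}_B(\gamma)^{-1}| \leq \prod_{j=1}^{k+1} \bigl|\mathcal{P}(\gamma|_{[\sigma_{j-1},\sigma_j]})^{-1}\bigr|,
\]
so it suffices to bound each factor by $e^{(\sigma_j-\sigma_{j-1})\alpha}$ and multiply, the exponents then summing to $b-a$.

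For a single segment I would use the differential equation \eqref{ODEforEinverse} characterising $Q(s) := \mathcal{P}(\gamma|_{[a,s]})^{-1}$, namely $\nabla_s Q(s) = -Q(s)V(\gamma(s))$ with $Q(a)=\id$. Since the fibre $\V_{\gamma(s)}$ varies with $s$, the cleanest route is to trivialise along $\gamma$ by parallel transport: set $q(s) := Q(s)\,[\gamma\|_a^s]$, an endomorphism of the \emph{fixed} inner product space $\V_{\gamma(a)}$ with $q(a)=\id$. Because parallel transport is $\nabla$-horizontal and $\nabla$ is a metric connection (so $[\gamma\|_a^s]$ is an isometry), differentiating and inserting \eqref{ODEforEinverse} turns the covariant equation into the honest ODE $q'(s) = -q(s)\tilde V(s)$ in a single vector space, where $\tilde V(s) := [\gamma\|_a^s]^{-1}V(\gamma(s))\,[\gamma\|_a^s]$ satisfies $|\tilde V(s)| = |V(\gamma(s))| \leq \alpha$. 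Gr\"onwall's inequality, in the form $|q(s)| \leq 1 + \alpha\int_a^s |q(u)|\,\dd u$, then yields $|q(s)| \leq e^{(s-a)\alpha}$, and since $[\gamma\|_a^s]^{-1}$ is again an isometry we recover $|Q(s)| = |q(s)\,[\gamma\|_a^s]^{-1}| \leq |q(s)| \leq e^{(s-a)\alpha}$. Applying this on each $[\sigma_{j-1},\sigma_j]$ (where $Q$ restarts at $\id$) bounds the $j$-th factor by $e^{(\sigma_j-\sigma_{j-1})\alpha}$.

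Combining the two steps gives $|\mathcal{P}_B(\gamma)^{-1}| \leq \prod_j e^{(\sigma_j-\sigma_{j-1})\alpha} = e^{(b-a)\alpha}$, as claimed. The argument is insensitive to whether the pieces are reflected geodesics or merely absolutely continuous, since only the uniform bound $|V|\leq\alpha$ and the isometry property of parallel transport enter, with the $\dot\gamma$-dependence absorbed entirely into the (norm-preserving) parallel transport. I do not expect a serious obstacle here: the only genuinely non-formal point is the trivialisation, where one must keep track of the fact that the fibre of $Q(s)$ moves with $s$ and convert the covariant ODE \eqref{ODEforEinverse} into a bona fide ODE in a fixed space before invoking Gr\"onwall — and this is exactly where the hypothesis that $\nabla$ is metric is used. (One could instead differentiate $|q(s)v|^2$ directly, but the trivialisation makes the estimate most transparent.)
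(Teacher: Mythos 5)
Your proposal is correct and follows essentially the same route as the paper: split $\mathcal{P}_B(\gamma)^{-1}$ at the reflection times, use that the self-adjoint involution $B$ (and $A$) has operator norm one, and apply Gr\"onwall to the ODE \eqref{ODEforEinverse} on each boundary-free segment, summing the exponents to $b-a$. The only cosmetic difference is that you trivialise along $\gamma$ by parallel transport to get an honest ODE in the fixed fibre before invoking Gr\"onwall, whereas the paper differentiates $|Q(s)|^2$ covariantly in place; both variants rest on exactly the same ingredient, the metric property of $\nabla$.
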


\begin{proof}
  Suppose first that $\gamma(s) \in M \setminus \partial M$ for $s \in (0, t)$, i.e.\ $\gamma$ does not reflect. Let $Q(s)$ be the solution to the ordinary differential equation \eqref{ODEforEinverse}. Then
  \begin{equation*}
  \begin{aligned}
    2 |Q(s)|\frac{\dd}{\dd s} |Q(s)| &= \frac{\dd}{\dd s} |Q(s)|^2 = 2\<Q(s), \nabla_s Q(s)\> = -2\<Q(s), V\bigl(\gamma(s)\bigr)Q(s)\> \\ 
    &\leq 2|Q(s)|\bigl|V\bigl(\gamma(s)\bigr)Q(s)\bigr|
     \leq 2\alpha |Q(s)|^2,
  \end{aligned}
  \end{equation*}
  hence 
  \begin{equation*}
    \frac{\dd}{\dd s} |Q(s)|\leq \alpha |Q(s)|.
  \end{equation*}
  From Gronwall's lemma \cite[10.5.1.3]{dieudonne}, we obtain therefore $|\mathcal{P}_B(\gamma)^{-1}| = |Q(t)| \leq e^{t\alpha}$. Now let $\gamma$ be arbitrary and let $\sigma_1 < \dots < \sigma_k$ be the times in $(a, b)$ that $\gamma$ hits the boundary. Then we have
  \begin{equation*}
  \begin{aligned}
    |\mathcal{P}_B(\gamma)^{-1}| &\leq \bigl|\mathcal{P}_B\bigl(\gamma|_{[a, \sigma_1]}\bigr)^{-1}\bigr|\bigl|\mathcal{P}_B\bigl(\gamma|_{[\sigma_1, \sigma_2]}\bigr)^{-1}\bigr| \cdots \bigl|\mathcal{P}_B\bigl(\gamma|_{[\sigma_{k-1}, \sigma_k]}\bigr)^{-1}\bigr|\bigl|\mathcal{P}_B\bigl(\gamma|_{[\sigma_k, b]}\bigr)^{-1}\bigr| \\
    &\leq e^{(\sigma_1-a) \alpha + (\sigma_2-\sigma_1) \alpha + \dots + (\sigma_k - \sigma_{k-1})\alpha + (b-\sigma_k) \alpha} = e^{(b-a)\alpha},
    \end{aligned}
  \end{equation*}
  where we used that $B$ is a self-adjoint involution, hence an isometry.
\end{proof}

Throughout the proof, we use the following notation.

\begin{notation}
Let $x \in M$, $t>0$ and $v \in \Omega_{t, x} \subseteq T_x M$.
\begin{enumerate}[(a)]
\item If $x \in M \setminus \partial M$, denote by $\gamma_v \in H_{x;\{0<t\}}^{\mathrm{refl}}(M)$ the unique reflected geodesic with $\dot{\gamma}_v(0) = v$ of length $t$. 
\item If $x \in \partial M$, and $v \in T_x^{>0}M = \{ v \mid \<v, \n\> > 0\}$ is inward directed, denote by $\gamma_v := (\gamma_v, +1) \in H_{x;\{0<t\}}^{\mathrm{refl}}(M)$ the unique reflected geodesic with $\dot{\gamma}_v(0) = v$ and positive sign.
\item If $x \in \partial M$, and $v \in T_x^{<0}M = \{ v \mid \<v, \n\> < 0\}$ is outward directed,  denote by $\gamma_v := (\gamma_v, -1) \in H_{x;\{0<t\}}^{\mathrm{refl}}(M)$ the unique reflected geodesic with $\dot{\gamma}_v(0) = Rv$ and negative sign.
\end{enumerate}
\end{notation}

Up to the sign in the case that $x \in \partial M$, $\gamma_v$ is the footpoint curve produced by the broken billiard flow. Now notice that we defined the smooth structure on $H_{x;\{0<t\}}^{\mathrm{refl}}(M)$ in such a way that the map
\begin{equation*}
  \Phi: T_x M \supseteq \Omega_{t, x} \longrightarrow H_{x;\{0<t\}}^{\mathrm{refl}}(M), ~~~~~~ v \longmapsto \gamma_v
\end{equation*}
is a diffeomorphism for any $x \in M$. The differential $d\Phi|_v$ assigns to a vector $w \in T_x M$ the Jacobi field $X_w$ along $\gamma_v$ with $X_w(0) = 0$ and $\nabla_s X_w(0+) = w$. Therefore
\begin{equation*}
  \bigl(d\Phi|_v w_1, d\Phi|_v w_2\bigr)_{\Sigma\text{-}H^1} = \<X_{w_1}(0+), X_{w_2}(0+)\> t = t\<w_1, w_2\>
\end{equation*}
so that $\Phi$ is a conformal mapping with
\begin{equation} \label{JacobianOfThisMapPhi}
  \bigl|\det\bigl(d \Phi|_v\bigr)\bigr| = t^{-n/2}.
\end{equation}
Because $|\dot{\gamma}_v(s)| \equiv |v|$ for all $s$ as ${\gamma}_v$ is a piecewise geodesic and $R$ is an isometry, we obtain
\begin{equation*}
  E(\gamma_v) = \frac{1}{2} \int_0^t |\dot{\gamma}_v(s)|^2 \dd s = \frac{t|v|^2}{2}.
\end{equation*}
Therefore, the transformation formula on the map $\Phi$ yields (using that $\Omega_{t, x}$ has full measure in $T_x M$  by Lemma~\ref{LemmaOpenAndDense}) that
\begin{equation} \label{ReformulationPt}
  P_t u(x) = \int_{T_x M} \varphi_t(v)\, \mathcal{P}(\gamma_v)^{-1} u\bigl(\gamma_v(t)\bigr)\dd v 
\end{equation}
where we set $\varphi_t(v) := t^{n/2}(4\pi)^{-n/2} e^{-t|v|^2/4}$. The function $\varphi_t$ is a simple Gaussian function, where the pre-factor just ensures that it integrates to one over $T_xM$.

\begin{lemma} \label{LemmaNormBound}
  Let $\alpha$ be a bound on the pointwise operator norm of $V$. Then for all $u \in C^0(M, \V)$ and for any $1 \leq p \leq \infty$, $t\geq 0$, we have
  \begin{equation*}
    \|P_t u\|_{L^p} \leq e^{\alpha t} \|u\|_{L^p}
  \end{equation*}
where $P_t$ is the family of Prop~\ref{PropProperFamily}.
\end{lemma}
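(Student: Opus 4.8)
The plan is to exploit the reformulation \eqref{ReformulationPt} of $P_t$ as an average against the Gaussian density $\varphi_t$, which integrates to one over each fiber $T_xM$, and to treat the endpoint cases $p=\infty$ and $1 \leq p < \infty$ separately. Throughout, the pointwise bound $|\mathcal{P}_B(\gamma_v)^{-1}| \leq e^{t\alpha}$ from Lemma~\ref{LemmaGronwallForP} supplies the exponential factor, while the volume-preservation of the broken billiard flow will absorb the remaining integral in the finite-$p$ case.

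For $p = \infty$ (which covers $C^0$) I would argue pointwise. Since $\varphi_t \geq 0$ and $\int_{T_xM} \varphi_t(v)\,\dd v = 1$, formula \eqref{ReformulationPt} and Lemma~\ref{LemmaGronwallForP} give, for every $x \in M$,
\begin{equation*}
  |P_t u(x)| \leq \int_{T_xM} \varphi_t(v)\,|\mathcal{P}_B(\gamma_v)^{-1}|\,|u(\gamma_v(t))|\,\dd v \leq e^{t\alpha}\|u\|_{L^\infty} \int_{T_xM}\varphi_t(v)\,\dd v = e^{t\alpha}\|u\|_{L^\infty},
\end{equation*}
and taking the supremum over $x$ yields the claim. (For $x \in \partial M$ the integral runs over inward- and outward-directed vectors with the sign convention of the Notation, but the estimate is unaffected.)

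For $1 \leq p < \infty$ the new ingredient is Jensen's inequality together with a change of variables by the flow $\Theta_t$. Because $\varphi_t\,\dd v$ is a probability measure on $T_xM$, Jensen's inequality and Lemma~\ref{LemmaGronwallForP} give the pointwise estimate
\begin{equation*}
  |P_t u(x)|^p \leq \int_{T_xM} \varphi_t(v)\,|\mathcal{P}_B(\gamma_v)^{-1}|^p\,|u(\gamma_v(t))|^p\,\dd v \leq e^{pt\alpha}\int_{T_xM}\varphi_t(v)\,|u(\gamma_v(t))|^p\,\dd v.
\end{equation*}
Integrating over $x \in M$ and writing the iterated integral as a single integral over $TM$ against its natural volume $\vol_{TM}$ (whose fibered form is $\dd v\,\dd x$), I would use that $\gamma_v(t) = \pi(\Theta_t(v))$ is the footpoint of the broken billiard flow and substitute $w = \Theta_t(v)$. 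The map $\Theta_t$ is measure-preserving and almost-everywhere invertible on $TM$, and it preserves the speed $|v|$, so that $\varphi_t$—depending on $v$ only through $|v|$—is invariant under the substitution. Hence the integral collapses to $\int_{TM}\varphi_t(w)\,|u(\pi(w))|^p\,\vol_{TM} = \int_M |u(x)|^p\,\dd x = \|u\|_{L^p}^p$, using once more that $\varphi_t$ integrates to one on each fiber. This gives $\|P_t u\|_{L^p}^p \leq e^{pt\alpha}\|u\|_{L^p}^p$, as desired.

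The main obstacle is making the change of variables rigorous despite $\Theta_t$ being only almost-everywhere defined and invertible. I would handle this by discarding the null sets $TM \setminus \Omega_t$ (and the fiberwise null sets $T\partial M$) via Lemma~\ref{LemmaOpenAndDense}, and by invoking the volume-preservation property recorded after \eqref{RescalingPropertyFlow}; the invariance of $\varphi_t$ then follows since the speed is conserved along reflected geodesics, $R$ being an isometry. A minor point worth noting is that $\partial M$ is a null set in $M$, so for the $L^p$ estimate only interior footpoints enter and the sign decoration on boundary paths plays no role.
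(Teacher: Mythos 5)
Your proposal is correct and follows essentially the same route as the paper: the $p=\infty$ case by a pointwise estimate using that $\varphi_t$ is a fiberwise probability density together with the bound $|\mathcal{P}_B(\gamma_v)^{-1}| \leq e^{t\alpha}$ from Lemma~\ref{LemmaGronwallForP}, and the case $1 \leq p < \infty$ by Jensen's inequality followed by the substitution $v \mapsto \Theta_{-t}(v)$, exploiting that the broken billiard flow preserves the volume of $TM$ and the fiberwise norm, hence leaves $\varphi_t$ invariant. Your explicit attention to the null sets $TM \setminus \Omega_t$ via Lemma~\ref{LemmaOpenAndDense} is a point the paper handles only implicitly, but it is the same argument.
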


Because $C^0(M, \V)$ is dense in $L^p(M, \V)$ if $p < \infty$, Lemma~\ref{LemmaNormBound} implies that $P_t$ extends uniquely to a family of bounded operators on $L^p(M, \V)$ satisfying the same norm bound for such $p$. In particular, $P_t$ satisfies property (i) of Prop.~\ref{PropChernoff} on each of the spaces $L^p(M, \V)$, $1 \leq p < \infty$.

In the proof and later, we denote by
\begin{equation*}
  \pi: TM \longrightarrow M
\end{equation*}
the canonical projection.

\begin{proof}
From \eqref{ReformulationPt} follows the estimate
\begin{equation*}
\begin{aligned}
  \|P_t u\|_\infty &\leq \sup_{x \in M} \int_{T_xM} \varphi_t(v) \,\bigl|\mathcal{P}_B(\gamma_v)^{-1}\bigr| \bigl|u\bigl(\gamma_v(t)\bigr)\bigr| \, \dd v \leq e^{t\alpha} \|u\|_\infty,
\end{aligned}
\end{equation*}
where we used that $|\mathcal{P}_B(\gamma_v)^{-1}| \leq e^{t\alpha}$ for all $v$ by Lemma~\ref{LemmaGronwallForP}, and the fact that the function $\varphi_t(v)$ integrates to one over $T_xM$. Hence the operator family $(P_t)_{t\geq 0}$ is uniformly bounded near zero on $C^0(M, \V)$.

For $1 \leq p < \infty$, we can pointwise use Jensen's inequality on the probability measure $\varphi_v(v)\, \dd v$ on $T_xM$ to obtain
\begin{equation*}
\begin{aligned}
  \|P_t u\|_{L^p}^p &\leq \int_{TM} \varphi_t(v) |\mathcal{P}_B(\gamma_v)^{-1}|^p |u\bigl(\gamma_v(t)\bigr)|^p \dd v \leq e^{tp\alpha}\int_{TM} \varphi_t(v) \bigl|\pi^*u\bigl(\Theta_t(v)\bigr)\bigr|^p \dd v
\end{aligned}
\end{equation*}
using the definition of the broken billiard flow. Now remember that the broken billiard flow preserves the measure on $TM$, as well as the norm of vectors, $|\Theta_{t}(v)| = |v|$, which implies $\varphi_t(v)= \varphi_t(\Theta_s(v))$ for all $s$. Hence transforming $v \mapsto \Theta_{-t}(v)$ gives
\begin{equation} \label{NormPreservingCalculation}
\begin{aligned}
  \int_{TM} \!\!\!\varphi_t(v) \bigl|\pi^*u\bigl(\Theta_t(v)\bigr)\bigr|^p \dd v
  &= \int_{TM}\!\!\! \varphi_t\bigl(\Theta_{-t}(v)\bigr) \bigl|\pi^*u(v)\bigr|^p \dd v = \int_{TM}\!\!\! \varphi_t(v) \bigl|\pi^*u(v)\bigr|^p \dd v \\
  &= \int_M \bigl|u(x)\bigr|^p \int_{T_xM} \!\!\!\varphi_t(v) \dd v \,\dd x = \|u\|_{L^p}^p.
\end{aligned}
\end{equation}
This shows the norm bound in the case $p< \infty$. 
\end{proof}

\begin{lemma} \label{LemmaQPreservesCZero}
 If $u \in C^0(M, \V)$, then also $P_tu \in C^0(M, \V)$, for all $t \geq 0$.
\end{lemma}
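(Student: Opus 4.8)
The plan is to work from the reformulation~\eqref{ReformulationPt}, which exhibits
\[
  P_t u(x) = \int_{T_x M} \varphi_t(v)\,\mathcal{P}_B(\gamma_v)^{-1} u\bigl(\gamma_v(t)\bigr)\,\dd v
\]
as a fibrewise integral over $T_xM$, and to show that the resulting section depends continuously on $x$ (the case $t=0$ being trivial, since $P_0u=u$). Since $x=\pi(v)$, the integrand is a single map $f\colon\Omega_t\to\pi^*\V$, $f(v):=\varphi_t(v)\,\mathcal{P}_B(\gamma_v)^{-1}u(\gamma_v(t))\in\V_{\pi(v)}$, defined on the open, full-measure set $\Omega_t\subseteq TM$, with $P_tu(x)=\int_{T_xM}f$. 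I need two ingredients: (a) joint continuity of $f$ on $\Omega_t$, and (b) a fibrewise integrable bound on $|f|$ that is uniform for $x$ near any given point. Granted these, fibrewise dominated convergence in a local trivialisation yields continuity of $P_tu$.

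For (a), continuity of $v\mapsto\gamma_v(t)=\pi(\Theta_t(v))$ on $\Omega_t$ is precisely the content of Lemma~\ref{LemmaOpenAndDense}, and composing with the continuous $u$ handles the factor $u(\gamma_v(t))$. It remains to see that $v\mapsto\mathcal{P}_B(\gamma_v)^{-1}$ is continuous on $\Omega_t$. Fixing $v_0\in\Omega_t$, the inductive argument in the proof of Lemma~\ref{LemmaOpenAndDense} provides a neighbourhood of $v_0$ on which all $\gamma_w$ exist past time $t$ with constant reflection count, and on which the reflection times together with the initial data of each geodesic segment depend continuously on $w$. Each factor $\mathcal{P}(\gamma_w|_{[\sigma_{j-1},\sigma_j]})$ is the endpoint value of the linear ODE~\eqref{ODEforE} along a segment whose data vary continuously, hence is itself continuous, and therefore so is the product~\eqref{BPathOrderedExponential} and its inverse; the one remaining configuration is the obstacle discussed below.

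For (b), Lemma~\ref{LemmaGronwallForP} gives $|\mathcal{P}_B(\gamma_v)^{-1}|\le e^{t\alpha}$, so $|f(v)|\le e^{t\alpha}\|u\|_\infty\,\varphi_t(v)$, a fixed Gaussian on each fibre. To pass across fibres, I fix $x_0$, choose a relatively compact neighbourhood $U$ with smooth trivialisations of $TM$ and $\V$, and write $\psi_y\colon T_yM\to\R^n$ for the fibre isomorphisms; then $P_tu(y)=\int_{\R^n}\tilde f(y,\xi)\rho(y)\,\dd\xi$ for a continuous volume density $\rho$. Continuity and positive-definiteness of the metric on the compact $\overline U$ yield a constant $c>0$ with $|\psi_y^{-1}(\xi)|^2\ge c|\xi|^2$, hence the $y$-uniform integrable bound $|\tilde f(y,\xi)|\le C\,e^{-tc|\xi|^2/4}$. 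For $y_n\to x_0$ and almost every $\xi$ the vector $\psi_{x_0}^{-1}(\xi)$ lies in the open set $\Omega_t$, so $\psi_{y_n}^{-1}(\xi)\in\Omega_t$ eventually, and (a) gives $\tilde f(y_n,\xi)\to\tilde f(x_0,\xi)$ pointwise almost everywhere; dominated convergence then gives $P_tu(y_n)\to P_tu(x_0)$, and $x_0$ was arbitrary.

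The main obstacle is the continuity in (a) near the boundary. For interior $v$ it follows cleanly, but as $v$ approaches an outward-pointing boundary vector $v_0\in TM|_{\partial M}$ the number of reflections in $(0,t)$ drops by one: a reflection time $\sigma_1(w)$ tends to $0$ while the decoration of $v_0$ supplies the factor $A=B$ of Definition~\ref{DefBPathOrderedExponential}. Since $\mathcal{P}$ over a segment of vanishing length tends to $\id$, the product~\eqref{BPathOrderedExponential} still varies continuously, the disappearing segment's $B$-insertion matching exactly the factor $A=B$ attached to outward-starting paths; verifying this compensation is the one genuinely delicate point.
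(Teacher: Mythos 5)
Your proposal is correct and takes essentially the same route as the paper's proof: it rewrites $P_tu$ via \eqref{ReformulationPt}, dominates the integrand by a Gaussian using Lemma~\ref{LemmaGronwallForP}, deduces almost-everywhere continuity of $v \mapsto \mathcal{P}_B(\gamma_v)^{-1}u(\gamma_v(t))$ from Lemma~\ref{LemmaOpenAndDense}, and resolves the outward-pointing boundary case by exactly the compensation the paper computes, namely that the shrinking first segment's transport tends to $\id$ while its $B$-insertion becomes the decoration factor $A=B$ of the limit path. The only cosmetic difference is that the paper chooses the trivialization to be a fiberwise isometry, which makes your compactness argument for a uniform dominating Gaussian unnecessary.
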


\begin{proof}
Choose a local trivialization $\psi: U \times \R^n \longrightarrow TM|_{U}$ over an open set $U\subseteq M$ that is an isometry in each fiber. Then since $\varphi(v) = \varphi(\psi_{x} v)$ for each $v \in \R^n$ and each $x \in U$,
\begin{equation*}
  P_tu(x) = \int_{\R^n} \varphi_t(v) \mathcal{P}_B(\gamma_{\psi_{x}v})^{-1}u\bigl(\gamma_{\psi_{x} v}(t)\bigr) \dd v.
\end{equation*}
If $x_j$ is a sequence in $U$ converging to $x \in U$ as $j \rightarrow \infty$, then $\psi_{x_j} v$ converges to $\psi_x v$ in the topology of $TM$. Therefore, by the Lebesgue's theorem of dominated convergence, it suffices to show that the function
\begin{equation*}
  f(t, v) := \mathcal{P}_B(\gamma_{v})^{-1}u\bigl(\gamma_{v}(t)\bigr)
\end{equation*}
is uniformly bounded and continuous in $v$ at almost all $v \in TM$. The function $u(\gamma_v(t))$ is continuous, since $u$ is continuous and $\gamma_v(t)$ depends continuously on $v \in \Omega_{t, x}$ (because the solutions of ordinary differential equations depend continuously on the initial data).

For the same reason, $\mathcal{P}_B(\gamma_{v})$ is continuous near all $v \in \Omega_{t, x}$ such that either $\pi(v) \notin \partial M$ or $v \in T^{>0}_xM = \{v \mid \< v, \n\> > 0\}$ for $x \in \partial M$.

It remains to check the case that $v \in T^{<0}_x M = \{v \mid \< v, \n\> < 0\}$ for $x \in \partial M$. To this end, let $v \in TM|_{\partial M}$ be outward directed and let $v_j \in TM$ be a sequence of vectors that converges to $v$. Let $0\leq\sigma_{1,j}< \dots < \sigma_{k, j}<t$ be the times when $\gamma_{v_j}$ hits the boundary (the number $k$ of hits stabilizes for $j$ large enough). Then
\begin{equation*}
\begin{aligned}
  \mathcal{P}_B(\gamma_{v_j})_B^{-1} = \mathcal{P}(\gamma_{v_j}|_{[0, \sigma_{1, j}]})^{-1} B &\mathcal{P}(\gamma_{v_j}|_{[\sigma_{1, j}, \sigma_{2, j}]})^{-1} B \cdots \\
  &~~~~~\cdots B \mathcal{P}(\gamma_{v_j}|_{[\sigma_{k-1, j}, \sigma_{k, j}]})^{-1} B \mathcal{P}(\gamma_{v_j}|_{[\sigma_{k, j}, t]})^{-1}
\end{aligned}
\end{equation*}
and if $0 = \sigma_1 < \dots < \sigma_k < t$ are the times when $\gamma_v$ hits the boundary, we have
\begin{equation*}
\begin{aligned}
  \mathcal{P}_B(\gamma_{v})_B^{-1} = B &\mathcal{P}(\gamma_{v}|_{[\sigma_{1}, \sigma_{2}]})^{-1} B \cdots B \mathcal{P}(\gamma_{v}|_{[\sigma_{k-1}, \sigma_{k}]})^{-1} B \mathcal{P}(\gamma_{v}|_{[\sigma_{k}, t]})^{-1}
\end{aligned}
\end{equation*}
Because $\sigma_{i,j}\rightarrow\sigma_i$ as $j \rightarrow \infty$ (in particular $\sigma_{1, j} \rightarrow 0$), $\mathcal{P}(\gamma_{v_j}|_{[0, \sigma_{1, j}]})^{-1}$ converges to the identity in this limit and hence $\mathcal{P}_B(\gamma_{v_j})^{-1}$ converges to $\mathcal{P}_B(\gamma_{v})^{-1}$.
\end{proof}

Lemma~\ref{LemmaQPreservesCZero} together with Lemma~\ref{LemmaNormBound} (for $p=\infty$) shows that $P_t$ preserves the space $C^0(M, \V)$ and that the family $(P_t)_{t\geq 0}$ satisfies property (i) of Prop.~\ref{PropChernoff} on this space. 

\medskip

From now on, we assume that we have  $V = 0$, that is $L = \nabla^*\nabla$ in the decomposition \eqref{LaplaceDecomposition}. Then $\mathcal{P}_B(\gamma) = [\gamma\|_0^t]_B$, the $B$-parallel transport, so that \eqref{ReformulationPt} reads
\begin{equation} \label{PreliminaryReformulation}
  P_t u(x) = \int_{T_xM} \varphi_t(v) [\gamma_v\|_0^{t}]_B^{-1} \pi^* u\bigl(\Theta_t(v)\bigr) \dd v,
\end{equation}
using the definition of the broken billiard flow.
Now substituting $v \mapsto v t^{-1/2}$, we obtain
\begin{equation*}
  P_t u(x) = \int_{T_xM} \varphi(v) [\gamma_v\|_0^{t^{1/2}}]_B^{-1} \pi^* u\bigl(\Theta_{t^{1/2}}(v)\bigr) \dd v,
\end{equation*}
where we set $\varphi(v) := \varphi_1(v)$ and used that $\Theta_t(sv) = s \Theta_{ts}(v)$ (which follows from \eqref{RescalingPropertyFlow}) and the fact that $\pi^*u(t^{-1/2}\Theta_{t^{1/2}}(v)) = \pi^*u(\Theta_{t^{1/2}}(v))$. This suggests defining
\begin{equation} \label{DefinitionQt}
  Q_t u(x) := P_{t^2}u(x) = \int_{T_xM} \varphi(v) [\gamma_v\|_0^{t}]_B^{-1} \pi^* u\bigl(\Theta_{t}(v)\bigr) \dd v
\end{equation}
for $u \in C^0(M, \V)$. Because $Q_t$ is just a rescaling of $P_t$, $Q_t$, $t\geq 0$ extends to a uniformly bounded family of operators just as $P_t$. Notice that $Q_t$ is actually well defined for all $t \in \R$, with $Q_0 = \id$.

\begin{lemma} \label{LemmaStrongContinuity}
In the case $V=0$, the operator family $(Q_t)_{t \in \R}$ (and hence also $(P_t)_{t \geq 0}$), is strongly continuous on $L^p(M, \V)$ for any $1 \leq p < \infty$.
\end{lemma}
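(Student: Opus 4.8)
The plan is to combine a reduction to a dense subspace with a double application of dominated convergence. Since $V=0$, Lemma~\ref{LemmaNormBound} shows that each $Q_t = P_{t^2}$ is a contraction on $L^p(M,\V)$, so the family $(Q_t)_{t\in\R}$ is uniformly bounded. By a standard $\varepsilon/3$-argument, it therefore suffices to prove strong continuity on the dense subspace $C^0(M,\V) \subseteq L^p(M,\V)$. So I fix $u \in C^0(M,\V)$ and $t_0 \in \R$ and aim to show $Q_t u \to Q_{t_0} u$ in $L^p$ as $t \to t_0$; since $L^p$-convergence can be tested along sequences, it is enough to treat arbitrary sequences $t_n \to t_0$.

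The heart of the matter is a pointwise statement about the integrand in \eqref{DefinitionQt}. The key observation is that for almost every $v \in TM$ the time $t_0$ is not a reflection time of $\gamma_v$. Indeed, the set $\{v : \gamma_v(t_0) \in \partial M\}$ agrees, up to the null set $TM \setminus \Omega_{t_0}$ (null by Lemma~\ref{LemmaOpenAndDense}), with the preimage $\Theta_{t_0}^{-1}(TM|_{\partial M})$; as $\partial M$ has measure zero in $M$, the set $TM|_{\partial M}$ has measure zero in $TM$, and because $\Theta_{t_0}$ preserves the volume of $TM$, the preimage of this null set is again null. For such $v$ the reflection times of $\gamma_v$ are isolated and avoid a neighborhood of $t_0$, and $v \in \Omega_t$ for all $t$ in that neighborhood (because $T(v) > t_0$ strictly). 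Hence on this neighborhood the footpoint $\gamma_v(t)$ depends continuously on $t$ by continuous dependence of ODE solutions on their data, and the number of reflections in $[0,t]$ is locally constant, so $[\gamma_v\|_0^t]_B^{-1}$ — being ordinary parallel transport modified by a fixed word in $B$ — is continuous at $t=t_0$. Combined with continuity of $u$, this shows that the integrand $v \mapsto [\gamma_v\|_0^t]_B^{-1}\,\pi^*u(\Theta_t(v)) = [\gamma_v\|_0^t]_B^{-1}\,u(\gamma_v(t))$ converges, for almost every $v$, to its value at $t_0$.

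To upgrade this to $L^p$-convergence I apply dominated convergence twice. For fixed $x$ the integrand is bounded in norm by $\varphi(v)\|u\|_\infty$, uniformly in $t$: since $B$ is an isometry and parallel transport with respect to the metric connection $\nabla$ is an isometry, we have $|[\gamma_v\|_0^t]_B^{-1}| = 1$, and $\varphi(v)\|u\|_\infty$ is integrable over $T_xM$. By Fubini the fibrewise set $\{v \in T_xM : \gamma_v(t_0)\in\partial M\}$ is null for almost every $x$, so dominated convergence on $T_xM$ yields $Q_t u(x) \to Q_{t_0}u(x)$ for almost every $x$. Finally $|Q_t u(x) - Q_{t_0}u(x)| \leq 2\|u\|_\infty$ is a constant, hence integrable over the compact manifold $M$, and a second application of dominated convergence gives $\|Q_t u - Q_{t_0}u\|_{L^p} \to 0$. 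Strong continuity of $(P_t)_{t\geq 0}$ then follows from $P_t = Q_{\sqrt{t}}$.

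The main obstacle is the measure-zero step: one must argue carefully that the directions for which $t_0$ is a genuine reflection time form a negligible set, and that this negligibility persists fibrewise for almost every basepoint $x$. Everything downstream — continuity of the footpoint and of the $B$-parallel transport away from reflection times, together with the uniform Gaussian domination — is then routine, relying only on continuous dependence of ODE solutions on initial data (as in Lemma~\ref{LemmaOpenAndDense}) and on the isometry property of $B$ and of parallel transport.
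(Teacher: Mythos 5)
Your proof is correct and follows essentially the same route as the paper's: almost-everywhere continuity in $t$ of the integrand $[\gamma_v\|_0^t]_B^{-1}u\bigl(\gamma_v(t)\bigr)$, two applications of dominated convergence (fibrewise over $T_xM$, then over $M$), and a density/uniform-boundedness argument to pass from $C^0(M,\V)$ to $L^p(M,\V)$. The only difference is that you actually justify the measure-zero claim (via volume preservation of the broken billiard flow plus Fubini, yielding the null-set statement for almost every $x$ rather than every $x$, which suffices for the $L^p$ conclusion), whereas the paper merely asserts that $\gamma_v(t_0)\notin\partial M$ for almost all $v$.
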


\begin{proof}
We first show that for each $u \in C^0(M, \V)$ and any $x \in M$, the function $t \mapsto Q_tu(x)$ is continuous. 
For $u \in C^0(M, \V)$, consider the function
\begin{equation*}
f(t, v):= [\gamma_v\|_0^{t}]_B^{-1} \pi^* u\bigl(\Theta_{t}(v)\bigr)= [\gamma_v\|_0^{t}]_B^{-1} u\bigl(\gamma_v(t)\bigr).
\end{equation*}
for $t \in \R$, $v \in T_x M$. If for a given $t_0 \in \R$, we have $\gamma_v(t) \notin \partial M$ (which is the case for almost all $v$), then $f(t, v)$ is clearly continuous in $t$. Therefore $\varphi(v)f(t, v) \rightarrow \varphi(v) f(t_0, v)$ as $t \rightarrow t_0$ for almost all $v \in T_x M$. Since $\varphi(v) f(t, v) \leq \|u\|_\infty \varphi(v)$, we also found a dominating integrable function, hence
\begin{equation*}
  Q_t u(x) = \int_{T_xM} \varphi(v) f(t, v) \dd v \longrightarrow \int_{T_xM} \varphi(v) f(t_0, v) \dd v = Q_{t_0}u(x)
\end{equation*}
as $t \rightarrow t_0$, by the dominated convergence theorem. Furthermore, since $u \in C^0(M, \V)$, $Q_t u$ is uniformly bounded for $t$ in compact subsets of $\R$, by Lemma~\ref{LemmaNormBound}, and we have $Q_tu \rightarrow Q_{t_0}u$ pointwise almost everywhere as $t \rightarrow t_0$, hence also $Q_tu \rightarrow Q_{t_0}u$ in $L^p$, again by the dominated convergence theorem.

For a general $u \in L^p(M, \V)$, choose a family of continuous sections $u_k \in C^0(M, \V)$ such that $u_k \rightarrow u$ in $L^p$. Then
  \begin{equation*}
    \|Q_t u - Q_{t_0} u\|_{L^p} \leq \|Q_t (u - u_k)\|_{L^p} + \|Q_{t_0} (u_k - u)\|_{L^p} + \|Q_t u_k - Q_{t_0} u_k\|_{L^p}.
  \end{equation*}
  By the uniform boundedness of the family $(Q_t)_{t \in \R}$, one can now choose first $k$ large enough to make the first two terms as small as one likes and then $t$ close enough to $t_0$ to make the third term arbitrarily small. This shows that $Q_tu \rightarrow Q_{t_0}u$ as $t \rightarrow t_0$ in the general case, hence $(Q_t)_{r \in \R}$ is strongly continuous on $L^p(M, \V)$, for all $1 \leq p < \infty$.
\end{proof}

\begin{lemma} \label{LemmaStrongContinuity2}
 In the case $V=0$, the operator family $(Q_t)_{t \in \R}$ (and hence also $(P_t)_{t \geq 0}$), is strongly continuous on $C^0(M, \V)$. 
\end{lemma}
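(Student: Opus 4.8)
The plan is to upgrade the pointwise-in-$x$ continuity already obtained in Lemma~\ref{LemmaStrongContinuity} to a uniform statement by showing that the map $(t, x) \longmapsto Q_t u(x)$ is \emph{jointly} continuous as a section of $\V$ over $\R \times M$. Once this is known, compactness of $M$ finishes the argument: working in finitely many local trivializations of $\V$ over a fixed finite cover of $M$, the section $(t,x)\mapsto Q_t u(x)$ is uniformly continuous on each compact product $[t_0-1, t_0+1]\times M$, and comparing the two points $(t,x)$ and $(t_0, x)$ (which lie in the same fiber $\V_x$) gives $\sup_{x\in M} |Q_t u(x) - Q_{t_0} u(x)| \to 0$ as $t \to t_0$. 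This is exactly strong continuity of $(Q_t)_{t\in\R}$ on $C^0(M, \V)$, and since $P_t = Q_{\sqrt t}$ the same follows for $(P_t)_{t\ge 0}$.

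To prove joint continuity, I would fix $x_0 \in M$ and choose fiberwise-isometric trivializations of $TM$ and of $\V$ over a neighborhood $U$ of $x_0$, the former written $\psi\colon U\times\R^n \to TM|_U$ as in the proof of Lemma~\ref{LemmaQPreservesCZero}. In these trivializations, for $x \in U$,
\begin{equation*}
  Q_t u(x) = \int_{\R^n} \varphi(v)\, F\bigl(t, \psi_x v\bigr) \dd v, \qquad F(t, w) := [\gamma_w\|_0^t]_B^{-1} u\bigl(\gamma_w(t)\bigr),
\end{equation*}
with $F$ now vector-valued. Given a sequence $(t_j, x_j) \to (t_0, x_0)$, one has $\psi_{x_j} v \to \psi_{x_0} v$ in $TM$ for each fixed $v$, and since $V = 0$ the map $[\gamma_w\|_0^t]_B$ is a composition of parallel transports and copies of the isometry $B$, so that $|F| \le \|u\|_\infty$ and $\|u\|_\infty\,\varphi$ is an integrable dominating function. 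By dominated convergence it therefore suffices to show that $F$ is continuous at $(t_0, \psi_{x_0} v)$ for almost every $v$, i.e.\ at $(t_0, w_0)$ for almost every $w_0 \in T_{x_0} M$.

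The continuity of $F$ holds at every $(t_0, w_0)$ with $w_0 \in \Omega_{t_0, x_0}$ and $\gamma_{w_0}(t_0) \notin \partial M$; both conditions hold for almost all $w_0 \in T_{x_0}M$ (the first by Lemma~\ref{LemmaOpenAndDense}, the second being the measure-zero statement already invoked in Lemma~\ref{LemmaStrongContinuity}). Indeed, at such a point $\gamma_{w_0}$ exists up to a time $>t_0$ and meets $\partial M$ transversally at finitely many times, none of them equal to $t_0$; by the continuous dependence of billiard trajectories on initial data (Lemma~\ref{LemmaOpenAndDense}) the number of reflections is locally constant near $(t_0, w_0)$ and all reflection times stay bounded away from $t_0$. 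Off the reflection times, the footpoint $\gamma_w(t)$ and the $B$-parallel transport $[\gamma_w\|_0^t]_B$ depend jointly continuously on $(t, w)$, being produced by alternately solving the geodesic and parallel-transport equations and inserting the fixed endomorphisms $R$ and $B$; hence $F$ is jointly continuous there. Dominated convergence then yields $Q_{t_j} u(x_j) \to Q_{t_0} u(x_0)$, which is joint continuity.

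The main obstacle is precisely the passage from separate to joint continuity: Lemma~\ref{LemmaStrongContinuity} gives continuity in $t$ for fixed $x$ and Lemma~\ref{LemmaQPreservesCZero} gives continuity in $x$ for fixed $t$, but neither yields the uniform-in-$x$ conclusion, so the dominated-convergence argument must be rerun with $t$ and $w$ varying simultaneously. The delicate point forcing the almost-everywhere restriction is that $F(\cdot, w_0)$ genuinely jumps in $t$ whenever $t_0$ is an instant at which $\gamma_{w_0}$ meets $\partial M$, since the factor $B$ is then inserted into the $B$-parallel transport; thus the property $\gamma_{w_0}(t_0) \notin \partial M$ for almost every $w_0$ is indispensable and cannot be relaxed.
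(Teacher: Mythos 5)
Your route---proving \emph{joint} continuity of $(t,x) \mapsto Q_t u(x)$ and then invoking compactness of $M$---is genuinely different from the paper's proof, which fixes $t$, splits the Gaussian integral into $\{|v| \le R\}$ and its complement, and estimates the difference $|Q_tu(x) - Q_su(x)|$ uniformly in $x$ via uniform continuity of $u$. For $t_0 \neq 0$ your argument is sound, and in one respect it is \emph{more} careful than the paper's: your restriction to those $w_0$ with $\gamma_{w_0}(t_0) \notin \partial M$, together with the dominating function $\|u\|_\infty \varphi$, correctly isolates the trajectories that reflect near time $t_0$, for which the $B$-parallel transports at nearby times differ by a factor of $B$ and are \emph{not} close to each other; the paper's pointwise estimate for $|v| \le R$ silently ignores exactly these vectors.

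However, there is a genuine gap, and it sits at the one point where the lemma is actually needed for Chernoff's theorem (condition (ii) of Prop.~\ref{PropChernoff} is strong continuity at $t=0$ with $Q_0 = \id$). Your claim that ``both conditions hold for almost all $w_0 \in T_{x_0}M$'' is false when $t_0 = 0$ and $x_0 \in \partial M$: then $\gamma_{w_0}(t_0) = x_0 \in \partial M$ for \emph{every} $w_0$, so the second condition holds for no $w_0$ at all, and your dominated-convergence argument has no almost-everywhere continuity input. Worse, this cannot be repaired, because joint continuity at $(0, x_0)$ genuinely fails there: by the sign convention of Def.~\ref{DefBPathOrderedExponential}, every outward vector carries the factor $A = B$, so for fixed $x_0 \in \partial M$ dominated convergence gives
\begin{equation*}
  \lim_{t \to 0^+} Q_t u(x_0) = \int_{T^{>0}_{x_0}M} \varphi(v)\, u(x_0)\, \dd v + \int_{T^{<0}_{x_0}M} \varphi(v)\, B u(x_0)\, \dd v = \tfrac{1}{2}\bigl(u(x_0) + B u(x_0)\bigr),
\end{equation*}
which differs from $Q_0 u(x_0) = u(x_0)$ unless $u(x_0) \in \mathcal{W}^+$. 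Concretely, for Dirichlet conditions ($B = -\id$) and $u \equiv 1$, pairing $v \leftrightarrow Rv$ shows $Q_t u \equiv 0$ on $\partial M$ for every $t>0$, so $\|Q_t u - u\|_\infty \ge 1$ and strong continuity at $t = 0$ fails on $C^0(M,\V)$. (The paper's own proof does not escape this either: its key estimate via uniform continuity of $u$ is exactly what breaks down for reflecting trajectories, which is the entire issue at $t=0$; the statement can only hold on the subspace of sections whose boundary values lie in $\mathcal{W}^+$, or when $B = \id$.) So: your argument is fine away from $t_0 = 0$, but you should have noticed, and flagged, that the measure-zero property you rely on degenerates to a full-measure failure at $t_0 = 0$ over boundary points, and that no argument can close that case as stated.
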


\begin{proof}
Fix $t \in \R$. For any $x \in M$, $s \in \R$ and  $u \in C^0(M, \V)$, we have
  \begin{equation*}
    \bigl|Q_t u(x) -Q_su(x)\bigr| \leq \int_{T_xM} \varphi(v) \bigl| [\gamma_v\|_0^{t}]_B^{-1} u\bigl(\gamma_v(t)\bigr) - [\gamma_v\|_0^{s}]_B^{-1} u\bigl(\gamma_v(s)\bigr)\bigr| \dd v.
  \end{equation*}
  Let $\varepsilon >0$ and choose $R>0$ so large that
  \begin{equation*}
   2 \|u\|_\infty \int_{B_R(0)^c} \varphi(v)  \dd v \leq \frac{\varepsilon}{2},
  \end{equation*}
  where $B_R(0)$ denotes the $R$-ball around zero in $T_xM$.
  Now because $u \in C^0(M, \V)$ and $M$ is compact, $u$ is uniformly continuous (meaning that in local trivialization of $\V$, each component is uniformly continuous). Therefore, there exists $\delta>0$ such that
  \begin{equation*}
    \bigl| [\gamma_v\|_0^{t}]_B^{-1} u\bigl(\gamma_v(t)\bigr) - [\gamma_v\|_0^{s}]_B^{-1} u\bigl(\gamma_v(s)\bigr)\bigr| \leq \frac{\varepsilon}{2}\left( \int_{B_R(0)} \varphi(v)  \dd v\right)^{-1}
  \end{equation*}
  for all $v \in TM$ with $|v| \leq R$ and all $s$ with $|t-s| \leq \delta$. Because
\begin{equation*}
  \bigl| [\gamma_v\|_0^{t}]_B^{-1} u\bigl(\gamma_v(t)\bigr) - [\gamma_v\|_0^{s}]_B^{-1} u\bigl(\gamma_v(s)\bigr)\bigr| \leq 2 \|u\|_{\infty}
\end{equation*}  
  as $[\gamma_v\|_0^{t}]_B^{-1}$ is a fiberwise isometry,  we obtain in total that
  \begin{equation*}
  \begin{aligned}
    &\bigl|Q_t u(x) -Q_su(x)\bigr|\\
    &~~~~~~\leq \int_{B_R(0)} \varphi(v) \bigl| [\gamma_v\|_0^{t}]_B^{-1} u\bigl(\gamma_v(t)\bigr) - [\gamma_v\|_0^{s}]_B^{-1} u\bigl(\gamma_v(s)\bigr)\bigr| \dd v + 2 \|u\|_\infty \int_{B_R(0)^c} \varphi(v)\\
    &~~~~~~\leq \frac{\varepsilon}{2}+ \frac{\varepsilon}{2} \leq \varepsilon.
    \end{aligned}
  \end{equation*}
   for all $x \in M$, whenever $|t-s| \leq \delta$. The lemma follows.
\end{proof}

\begin{lemma} \label{LemmaPointwiseC2}
  Let $u \in C^2_B(M, \V)$, meaning that $u$ is a $C^2$ section of $\V$ satisfying the involutive boundary condition given by $B$. Then for each $x \in M$, the function $t \mapsto Q_tu(x)$ is $C^2$ and we have
  \begin{equation*}
  \begin{aligned}
    Q_t^\prime u(x) &= \int_{T_x M} \varphi(v) [\gamma_v\|_0^t]_B^{-1} \nabla_{\Theta_t(v)} u\bigl(\gamma_v(t)\bigr)\, \dd v,\\
    Q_t^{\prime\prime} u(x) &= \int_{T_x M} \varphi(v) [\gamma_v\|_0^t]_B^{-1}  \nabla^2 u|_{\gamma_v(t)}\bigl[\Theta_t(v), \Theta_t(v)\bigr]\, \dd v 
  \end{aligned}
  \end{equation*}
\end{lemma}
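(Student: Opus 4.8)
The plan is to differentiate under the integral sign in \eqref{DefinitionQt} twice, the substance of the argument being to justify this in the presence of reflections. Write
\begin{equation*}
f(s, v) := [\gamma_v\|_0^s]_B^{-1} u\bigl(\gamma_v(s)\bigr), \qquad Q_t u(x) = \int_{T_x M} \varphi(v)\, f(t, v)\, \dd v,
\end{equation*}
and fix $v \in \Omega_{|t|, x}$. On any subinterval on which $\gamma_v$ avoids $\partial M$, the $B$-parallel transport agrees with ordinary parallel transport, and conjugating the covariant derivative by it turns $\tfrac{\nabla}{\dd s}$ into $\tfrac{\dd}{\dd s}$; since $\gamma_v$ is a geodesic there, we have $\tfrac{\nabla}{\dd s}\dot\gamma_v \equiv 0$, so the lower-order term that would otherwise appear in the second derivative drops out and one obtains
\begin{equation*}
\partial_s f(s, v) = [\gamma_v\|_0^s]_B^{-1}\nabla_{\dot\gamma_v(s)} u, \qquad \partial_s^2 f(s, v) = [\gamma_v\|_0^s]_B^{-1}\nabla^2 u|_{\gamma_v(s)}\bigl[\dot\gamma_v(s), \dot\gamma_v(s)\bigr].
\end{equation*}
With $\dot\gamma_v(s) = \Theta_s(v)$ these are exactly the claimed integrands.

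The place where the hypothesis $u \in C^2_B$ enters is that these first derivatives extend \emph{continuously} across each reflection time $\sigma$. At such a time the $B$-parallel transport acquires a factor $B$, that is $[\gamma_v\|_0^{\sigma+}]_B = B\,[\gamma_v\|_0^{\sigma-}]_B$, so $f(\cdot, v)$ is continuous at $\sigma$ iff $B u = u$ at $\gamma_v(\sigma) \in \partial M$, which holds because $u|_{\partial M} \in \W^+$. Writing $w := \dot\gamma_v(\sigma+)$, so that $\dot\gamma_v(\sigma-) = Rw$, the one-sided limits of $\partial_s f$ at $\sigma$ agree iff $B\nabla_w u = \nabla_{Rw} u$. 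Decomposing $w = w^\top + \<w, \n\>\n$ into tangential and normal parts, this reduces to $B\nabla_{w^\top} u = \nabla_{w^\top} u$ and $B\nabla_\n u = -\nabla_\n u$: the first holds because $\nabla_{w^\top} u$ is a tangential derivative of the $\W^+$-section $u|_{\partial M}$ and the subbundles $\W^\pm$ are parallel, $B$ being covariantly constant; the second is precisely $\nabla_\n u|_{\partial M} \in \W^-$. Hence $f(\cdot, v)$ is $C^1$ and $g(s,v) := \partial_s f(s,v)$ is continuous on $\R$, piecewise $C^1$ with $\partial_s g = \partial_s^2 f$ as above.

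Because $B$-parallel transport is a fiberwise isometry and $|\dot\gamma_v(s)| \equiv |v|$, we have $|\partial_s f| \le |v|\,\|\nabla u\|_\infty$ and $|\partial_s^2 f| \le |v|^2\,\|\nabla^2 u\|_\infty$, so that $\varphi(v)|v|\,\|\nabla u\|_\infty$ and $\varphi(v)|v|^2\,\|\nabla^2 u\|_\infty$ are integrable over $T_x M$. Since $f(\cdot, v)$ is absolutely continuous with $\partial_s f = g$, Fubini gives $Q_{t_1}u(x) - Q_{t_0}u(x) = \int_{t_0}^{t_1} G_1(t)\,\dd t$, where $G_1(t) := \int_{T_x M}\varphi(v)\, g(t,v)\,\dd v$; and $G_1$ is continuous by dominated convergence, using that $g(\cdot, v)$ is continuous for a.e.\ $v$ (previous paragraph). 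Thus $t \mapsto Q_t u(x)$ is $C^1$ with derivative $G_1$, the first claimed formula. Running the identical argument with $g$ in place of $f$ yields $G_1(t_1) - G_1(t_0) = \int_{t_0}^{t_1} G_2(t)\,\dd t$, where $G_2(t) := \int_{T_x M}\varphi(v)\,[\gamma_v\|_0^t]_B^{-1}\nabla^2 u|_{\gamma_v(t)}[\Theta_t(v), \Theta_t(v)]\,\dd v$ is the second claimed integral.

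It remains to prove that $G_2$ is continuous, which upgrades the conclusion from $C^{1,1}$ to genuine $C^2$, and this is the main obstacle. Unlike $f$ and $g$, the integrand $h(s,v) := \partial_s^2 f(s,v)$ does \emph{not} in general extend continuously across reflections: the analogous second-order matching would require $B\,\nabla^2 u[w,w] = \nabla^2 u[Rw, Rw]$, which involves the second fundamental form and fails for a general $u \in C^2_B$. I would therefore argue instead that for each fixed $t$ the boundary-hitting set $N_t := \{v \in T_x M \mid \gamma_v(t) \in \partial M\}$ is Lebesgue-null in $T_x M$: on each region of $T_x M$ with a fixed number of reflections before time $t$ the endpoint map $v \mapsto \gamma_v(t)$ is smooth and, off the conjugate locus (itself null), a local diffeomorphism, so the preimage of the hypersurface $\partial M$ is null. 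Granting this, $h(\cdot, v)$ is continuous at $t$ for a.e.\ $v$, whence dominated convergence makes $G_2$ continuous; therefore $G_1 \in C^1$ with $G_1' = G_2$, i.e.\ $Q_t u(x) \in C^2$ with $Q_t''u(x) = G_2(t)$. The two genuinely substantial points are thus the reflection matching above, where involutivity and parallelism of $B$ are used, and this measure-zero statement.
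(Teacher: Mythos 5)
Your argument is, in its essentials, the same as the paper's: you work with the same integrand $f(s,v) = [\gamma_v\|_0^s]_B^{-1}u(\gamma_v(s))$; the heart of your proof --- the matching $B\nabla_w u = \nabla_{Rw}u$ across reflection times, derived from $u|_{\partial M} \in C^\infty(\partial M, \W^+)$, $\nabla_\n u|_{\partial M} \in C^\infty(\partial M, \W^-)$ and the parallelism of the splitting --- is precisely the paper's computation \eqref{BCalculation}; and your dominating functions $\varphi(v)|v|\,\|\nabla u\|_\infty$ and $\varphi(v)|v|^2\|\nabla^2 u\|_\infty$ are the paper's Lipschitz bound $\ell(v)$. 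Your fundamental-theorem-plus-Fubini route to differentiating under the integral sign is a harmless variant of the paper's difference-quotient/dominated-convergence argument. Both proofs also rest on the same measure-theoretic fact: for fixed $t$, the set $N_t = \{v \in T_xM \mid \gamma_v(t) \in \partial M\}$ is Lebesgue-null in $T_xM$ (the paper asserts this parenthetically in the proof of Lemma~\ref{LemmaStrongContinuity} and reuses it here without proof).

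However, your proposed proof of that fact --- the one place where you go beyond the paper --- is circular. For $v \in \Omega_{t,x} \cap N_t$ the boundary hit at time $t$ is transversal (this is exactly what membership in $\Omega_{t,x}$ guarantees), so arbitrarily close to $v$ there are vectors whose corresponding hit occurs just before $t$ and vectors for which it occurs just after $t$; hence the number of reflections before time $t$ is \emph{not} locally constant at $v$. Consequently your open regions of fixed reflection count are precisely $\Omega_{t,x}\setminus N_t$, the endpoint map sends each of them into $M \setminus \partial M$, and on these regions the statement ``the preimage of the hypersurface $\partial M$ is null'' is vacuously true: it never sees $N_t$ at all. What must be shown is that the complement of these regions is null --- which is exactly the claim in question. (Your side claim that the conjugate locus of the broken endpoint map is null would itself need an argument, but this is moot.) The fix is elementary: by the scaling property $\gamma_{s\omega}(t) = \gamma_\omega(st)$, a ray $\{s\omega \mid s>0\}$, $\omega \in S_xM$, meets $N_t \cap \Omega_{t,x}$ only at radii $s$ for which $st$ is a boundary-hitting time of $\gamma_\omega$, a locally finite and hence countable set of radii; since $\Omega_{t,x}$ has full measure by Lemma~\ref{LemmaOpenAndDense}, integration in polar coordinates shows that $N_t$ is null. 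With this replacement your proof is complete and coincides with the paper's.
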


\begin{proof}
Set as before
\begin{equation*}
    f(t, v) := [\gamma_v\|_0^{t}]^{-1}_B \pi^*u \bigl(\Theta_t(v)\bigr).
\end{equation*}
We first show that given $v \in T_xM$, $f(t, v)$ is $C^{1, 1}$ on the interval $[0, T(v))$, where $T(v)$ is the maximal life-time of the reflected geodesic $\gamma_v$ with $\dot{\gamma}_v(0+) = v$ (respectively $\dot{\gamma}_v(0+) = Rv$ if $v \in TM|_{\partial M}$ and $v$ is outward directed). Let $0 \leq \sigma_1 < \sigma_2< \dots < T(v)$ be the times in this interval where $\gamma_v$ hits the boundary (these are finitely many if $T(v)$ is finite, but may be infinitely many otherwise). Then clearly, $f(t, v)$ is $C^2$ on $[0, T(v)) \setminus \{\sigma_1, \sigma_2, \dots \}$ with 
\begin{equation*}
\begin{aligned}
  \frac{\partial f}{\partial t}(t, v) &= [\gamma_v\|_0^{t}]^{-1}_B \nabla_{\Theta_t(v)} u\bigl( \gamma_v(t)\bigr),\\
  \frac{\partial^2 f}{\partial t^2}(t, v) &= [\gamma_v\|_0^{t}]^{-1}_B\Bigl(\nabla^2 u|_{\gamma_v(t)} \bigl[\Theta_t(v), \Theta_t(v)\bigr] + \nabla_{\nabla_t \Theta_t(v)}u\bigl( \gamma_v(t)\bigr)\Bigr)
\end{aligned}
\end{equation*}
However, $\nabla_t \Theta_t(v) = 0$, which follows from the fact that $\Theta_t(v)$ is the velocity vector field of a geodesic. We need to check continuity of the derivatives at the times $\sigma_j$. Decompose $\dot{\gamma}_v(\sigma_j+) = w^\prime + w_0 \n$ with $w^\prime \in T_{\gamma(\sigma_j)}\partial M$ and $w_0 \in \R$, so that $\dot{\gamma}_v(\sigma_j-) = w^\prime - w_0 \n$. Then because $u$ satisfies the boundary condition, we have $u|_{\partial M} \in C^\infty(\partial M, \mathcal{W}^+)$ and $\nabla_\n u|_{\partial M} \in C^\infty(\partial M, \mathcal{W}^-)$, hence
\begin{equation*}
  Bu\bigl(\gamma_v(\sigma_j)\bigr) = u\bigl(\gamma_v(\sigma_j)\bigr), ~~~~~~~ B\nabla_\n u\bigl(\gamma_v(\sigma_j)\bigr) = - \nabla_\n u\bigl(\gamma_v(\sigma_j)\bigr),
\end{equation*}
and
\begin{equation} \label{BCalculation}
\begin{aligned}
  B\nabla_{\dot{\gamma}_v(\sigma_j+)} u\bigl( \gamma_v(\sigma_j)\bigr) &= 
  B\nabla_{w^\prime} u\bigl( \gamma_v(\sigma_j)\bigr) + w_0 B\nabla_{\n} u\bigl( \gamma_v(\sigma_j)\bigr)\\
  &=\nabla_{w^\prime} u\bigl( \gamma_v(\sigma_j)\bigr) - w_0 \nabla_{\n} u\bigl( \gamma_v(\sigma_j)\bigr)\\
  &=\nabla_{\dot{\gamma}_v(\sigma_j-)} u\bigl( \gamma_v(\sigma_j)\bigr).
\end{aligned}
\end{equation}
For the second equality, notice that if $\eta: (-\varepsilon, \varepsilon) \longrightarrow \partial M$ with $\dot{\eta}(0) = w^\prime$, then
\begin{equation*}
  \nabla_{w^\prime} u\bigl( \gamma_v(\sigma_j)\bigr) = \nabla_s\bigr|_{s=0}\bigl\{ u\bigl(\eta(s)\bigr) \bigr\}\in \mathcal{W}^+,
\end{equation*}
since $u(\eta(s)) \in \mathcal{W}^+$ for each $s$ and the splitting is parallel by assumption. Hence indeed $B\nabla_{w^\prime} u\bigl( \gamma_v(\sigma_j)\bigr) = \nabla_{w^\prime} u\bigl( \gamma_v(\sigma_j)\bigr)$. Now by \eqref{BCalculation} and the definition of $[\gamma_v\|_0^{\sigma_j+}]^{-1}_B$, we have
\begin{equation*}
\begin{aligned}
  \frac{\partial f}{\partial t}(\sigma_j+, v) &= [\gamma_v\|_0^{\sigma_j+}]^{-1}_B \nabla_{\dot{\gamma}_v(\sigma_j+)} u\bigl( \gamma_v(\sigma_j)\bigr)\\
  &= [\gamma_v\|_0^{\sigma_j+}]^{-1}_B B\nabla_{\dot{\gamma}_v(\sigma_j-)} u\bigl( \gamma_v(\sigma_j)\bigr)\\
  &=  [\gamma_v\|_0^{\sigma_j-}]^{-1}_B \nabla_{\dot{\gamma}_v(\sigma_j+)} u\bigl( \gamma_v(\sigma_j)\bigr)
  =\frac{\partial f}{\partial t}(\sigma_j-, v)
\end{aligned}
\end{equation*}
so that the derivative is indeed continuous. 

To check that the derivative of $f$ is Lipschitz, notice that
\begin{equation*}
  \left|\frac{\partial^2 f}{\partial t^2}(t, v)\right| \leq \|\nabla^2 u\|_\infty |v|^2 =:\ell(v)
\end{equation*}
so that $\frac{\partial f}{\partial t}(t, v)$ is uniformly Lipschitz with Lipschitz constant $\ell(v)$.
Now because the function $\varphi(v)f(t, v)$ is $C^1$ in $t$ for almost all $v$, with integrable derivative, we may differentiate under the integral sign to obtain
\begin{equation*}
  Q^\prime u(x) = \int_{T_xM} \varphi(v) \,\frac{\partial f}{\partial t}(t, v)\, \dd v = \int_{T_x M} \varphi(v) [\gamma_v\|_0^t]_B^{-1} \nabla_{\Theta_t(v)} u\bigl( \gamma_v(t)\bigr)\, \dd v.
\end{equation*}
For the second derivative, note that we have
\begin{equation*}
  \lim_{\varepsilon\rightarrow 0} \frac{1}{\varepsilon}\left(\frac{\partial f}{\partial t}(t+\varepsilon, v) - \frac{\partial f}{\partial t}(t, v)\right) = \frac{\partial^2 f}{\partial t^2}(t, v)
\end{equation*}
for almost all $v$ (since for fixed $t$, $\gamma_v(t) \notin \partial M$ for almost all $v$) and
\begin{equation*}
  \left|\frac{1}{\varepsilon}\left(\frac{\partial f}{\partial t}(t+\varepsilon, v) - \frac{\partial f}{\partial t}(t, v)\right)\right| \leq \ell(v)
\end{equation*}
by the considerations before.
Hence $\varphi(v) \ell(v)$ is an integrable dominating function for the difference quotient, and
\begin{equation*}
\begin{aligned}
  Q^{\prime\prime}_t u(x) &= \lim_{\varepsilon \rightarrow 0} \int_{T_xM} \varphi(v)\frac{1}{\varepsilon}\left(\frac{\partial f}{\partial t}(t+\varepsilon, v) - \frac{\partial f}{\partial t}(t, v)\right) \dd v \\
  &=   \int_{T_xM} \varphi(v) \lim_{\varepsilon \rightarrow 0} \frac{1}{\varepsilon}\left(\frac{\partial f}{\partial t}(t+\varepsilon, v) - \frac{\partial f}{\partial t}(t, v)\right)\dd v\\
   &=   \int_{T_xM}  \varphi(v) \frac{\partial^2 f}{\partial t^2}(t, v) \dd v,
\end{aligned}
\end{equation*}
where the exchange of integration and taking the limit is justified by the dominated convergence theorem. Continuity of $Q^{\prime\prime}_tu(x)$ in $t$ can be shown just as in the proof of Lemma~\ref{LemmaStrongContinuity}.
\end{proof}

\begin{proof}[of Prop.~\ref{PropProperFamily}]
The proof consists of two steps.

{\em Step 1.} Assume that $V = 0$ so that $P_t = Q_{t^{1/2}}$, with $Q_t$ given by \eqref{DefinitionQt}. In this case we already know from the Lemmas \ref{LemmaNormBound} and \ref{LemmaStrongContinuity}, respectively Lemma~\ref{LemmaStrongContinuity2} that $P_t$ satisfies properties (i)-(ii) of Prop.~\ref{PropChernoff} on each of the spaces $L^p(M, \V)$ with $1 \leq p < \infty$ and $C^0(M, \V)$, so it remains to verify property (iii). To this end, for $u \in C^2_B(M, \V)$, notice that
\begin{equation*}
  Q_0^\prime u(x) = \int_{T_xM} \varphi(v) \nabla_v u(x) \dd v = 0,
\end{equation*}
since the integrand is an odd function. Therefore, pointwise Taylor expansion yields
\begin{equation} \label{TaylorExpansionQt}
  Q_tu(x) = u(x) + \int_0^t (t-s)Q^{\prime\prime}_su(x)\dd s = u(x) + t^2 \int_0^1 (1-s) Q^{\prime\prime}_{ts} u(x) \dd s.
\end{equation}
The Taylor expansion is justified since $t \mapsto Q_t u(x)$ is $C^2$ for all $x \in M$, by Lemma \ref{LemmaPointwiseC2}. Formula \eqref{TaylorExpansionQt} implies that for each $x \in M$, we have
\begin{equation} \label{DifferenceQuotient}
  \frac{1}{t} \bigl(P_t u(x) - u(x)\bigr) = \int_0^1 (1-s) Q^{\prime\prime}_{t^{1/2} s}u(x) \dd s,
\end{equation}
so that limit evaluates to
\begin{equation*}
  \lim_{t \rightarrow 0}\frac{1}{t} \bigl(P_t u(x) - u(x)\bigr) = \int_0^1 (1-s) Q^{\prime\prime}_{0}u(x) \dd s
  = \frac{1}{2} Q^{\prime\prime}_0u(x),
\end{equation*}
as $t \mapsto Q^{\prime\prime}_tu(x)$ is continuous. Here we have
\begin{equation*}
  Q^{\prime\prime}_0u(x) = \int_{T_xM} \varphi(v) \nabla^2u|_x[v, v] \dd v = 2 \,\tr \,\nabla^2u|_x = -2L u(x),
\end{equation*}
where the second equality is an elementary result for Gaussian integrals.
Hence for any $x \in M$.
\begin{equation*}
  \lim_{t \rightarrow 0}\frac{1}{t} \bigl(P_t u(x) - u(x)\bigr) = - L u(x).
\end{equation*}
Furthermore, one shows similarly to the proof of Lemma~\ref{LemmaStrongContinuity2} that the convergence here is even uniform in $x$, so that this convergence is true in the spaces $C^0(M, \V)$ and $L^p(M, \V)$. By parabolic regularity up to the boundary, we have $e^{-tL} v \in C^\infty_B(M, \V)$ for each $v \in L^p(M, \V)$, so it indeed suffices to check this limit for $u \in C^2_B(M, \V)$. This proves property (iii) in the case that $V = 0$.

{\em Step 2.} For the case that $V \neq 0$, we use the Taylor expansion
\begin{equation*}
  \mathcal{P}_B(\gamma)^{-1} = [\gamma\|_0^t]^{-1}_B - \int_0^t \mathcal{P}_B(\gamma|_{[0, s]})^{-1}V\bigl(\gamma(s)\bigr) [\gamma\|_s^t]_B^{-1} \dd s.
\end{equation*}
Let $P_t$ be defined as in the proposition for the operator $L=\nabla^*\nabla + V$ and write $\tilde{P}_t$ for the operator family corresponding to the operator $\tilde{L} := \nabla^*\nabla$. Then by \eqref{ReformulationPt}, we have for $u \in C^0(M, \V)$
\begin{equation*}
  P_tu(x) = \tilde{P}_tu(x) - \int_0^t \int_{T_x M} \varphi_t(v) \mathcal{P}_B\bigl(\gamma_v|_{[0, s]}\bigr)^{-1}V\bigl(\gamma_v(s)\bigr) [\gamma_v\|_s^t]^{-1}_B u\bigl(\gamma_v(t)\bigr)\,\dd v \,\dd s.
\end{equation*}
Setting $\alpha := \|V\|_\infty$, Jensen's inequality and Lemma~\ref{LemmaGronwallForP} imply
\begin{equation*}
\begin{aligned}
  \|P_t u- \tilde{P}_t u\|_{L^p}^p &= t^p\int_M \left|\frac{1}{t}\int_0^t \int_{T_xM} \varphi_t(v) \mathcal{P}_B\bigl(\gamma_v|_{[0, s]}\bigr)^{-1}V\bigl(\gamma_v(s)\bigr) [\gamma_v\|_s^t]^{-1}_B u\bigl(\gamma_v(t)\bigr) \dd v\,\dd s\right|^p \dd x\\
  &\leq  t^{p-1}\int_0^t  \alpha e^{\alpha s} \int_{TM} \varphi_t(v) \bigl|u\bigl(\gamma_v(t)\bigr)\bigr|^p\dd v\,\dd s = \|u\|_{L^p}^p t^{p-1} (e^{\alpha t} - 1)
\end{aligned}
\end{equation*}
where in the last step, we used the calculation \eqref{NormPreservingCalculation}. This shows that $P_t - \tilde{P}_t$ converges to zero in norm as $t \rightarrow 0$, hence $P_t$ is strongly continuous at zero on $L^p$ (since $\tilde{P}_t$ is, by Lemma~\ref{LemmaStrongContinuity}). For the $C^0$ case, we similarly find $\|P_t u- \tilde{P}_t u\|_{C^0} \leq \alpha t e^{\alpha t}\|u\|_{C^0}$, so $P_t$ is also strongly continuous at zero on $C^0$ (by virtue of Lemma~\ref{LemmaStrongContinuity2}). Strong continuity near $t_0 \neq 0$ can be shown similar as before, by using the fact that the integrand
\begin{equation*}
  \int_0^t \varphi_t(v) \mathcal{P}_B\bigl(\gamma_v|_{[0, s]}\bigr)^{-1}V\bigl(\gamma_v(s)\bigr) [\gamma_v\|_s^t]^{-1}_B u\bigl(\gamma_v(t)\bigr)\,\dd v
\end{equation*}
depends continuously on $t$ near $t_0 \neq 0$ for almost all $v \in T_x M$ and has a dominating integrable function.

It remains to check that $P_t$ has the correct infinitesimal generator. From the Taylor expansion above follows that
\begin{equation*}
\begin{aligned}
\frac{1}{t} \bigl(P_t u(x) - u(x)\bigr) = \frac{1}{t} \bigl(&\tilde{P}_t u(x) - u(x)\bigr)\\
&- \frac{1}{t}\int_0^t \int_{T_xM} \varphi_t(v) \mathcal{P}_B\bigl(\gamma_v|_{[0, s]}\bigr)^{-1}V\bigl(\gamma_v(s)\bigr) [\gamma_v\|_s^t]^{-1}_B u\bigl(\gamma_v(t)\bigr)\,\dd v\, \dd s
\end{aligned}
\end{equation*}
The first term converges to $-\tilde{L}u(x) = -\nabla^*\nabla u(x)$ uniformly by Step 1, while the second term converges uniformly to $-Vu(x)$, which can be shown similar to the proof of Lemma~\ref{LemmaStrongContinuity2}. This finishes the proof in the general case.
\end{proof}

    \bibliography{Literatur} 
 
\end{document}